\newcommand{\mynum}[2]{{\qty[scientific-notation=false, round-mode=figures,round-precision = 5, drop-zero-decimal, round-pad = false]{#1}{#2}}}
\let\cite=\citet
\begin{document}

\newcommand\footnotemarkfromtitle[1]{%
  \renewcommand{\thefootnote}{\fnsymbol{footnote}}%
  \footnotemark[#1]%
  \renewcommand{\thefootnote}{\arabic{footnote}}}

\newcommand{\TheTitle}{A high-order explicit Runge-Kutta approximation technique for the Shallow Water Equations}
\newcommand{\TheAuthors}{J.-L. Guermond, M. Maier, E.~J. Tovar}

\headers{Well-balanced invariant-domain-preserving approximation}{\TheAuthors}

\title{{\TheTitle}\thanks{Draft version, March 25, 2024%
  \funding{This material is based upon work supported in part by the
    National Science Foundation grants DMS-1619892 and DMS-2110868 (JLG),
    DMS-1912847 and DMS-2045636 (MM), by the Air Force Office of Scientific
    Research, under grant/contract number FA9550-23-1-0007 (JLG, MM), the
    Army Research Office, under grant number W911NF-19-1-0431 (JLG), the U.S.
    Department of Energy by Lawrence Livermore National Laboratory under
    Contracts B640889, B641173 (JLG). ET acknowledges the support from the
    U.S. Department of Energy’s Office of Applied Scientific Computing
    Research (ASCR) and Center for Nonlinear Studies (CNLS) at Los Alamos
    National Laboratory (LANL) under the Mark Kac Postdoctoral Fellowship in
    Applied Mathematics.}}}

\author{Jean-Luc Guermond\footnotemark[1]
  \and Matthias Maier\footnotemark[1]
  \and Eric J. Tovar\footnotemark[2]
}

\maketitle

\renewcommand{\thefootnote}{\fnsymbol{footnote}}
\footnotetext[1]{Department of Mathematics, Texas A\&M University 3368
  TAMU, College Station, TX 77843.}
\footnotetext[2]{Theoretical Division, Los Alamos National Laboratory, P.O. Box 1663, Los Alamos, NM, 87545.}

\renewcommand{\thefootnote}{\arabic{footnote}}
\begin{abstract}
  We introduce a high-order space-time approximation of the Shallow
  Water Equations with sources that is invariant-domain preserving
  (IDP) and well-balanced with respect to rest states.  The employed
  time-stepping technique is a novel explicit Runge-Kutta (ERK)
  approach which is an extension of the class of ERK-IDP methods
  introduced by Ern and Guermond (SIAM J. Sci. Comput. 44(5),
  A3366--A3392, 2022) for systems of non-linear conservation
  equations.
  The resulting method is then numerically illustrated through
  verification and validation.
\end{abstract}

\begin{keywords}
  Shallow water equations, well-balanced, invariant-domain-preserving,
  explicit Runge-Kutta, high-order accuracy, convex limiting
\end{keywords}

\begin{AMS}
  65M60, 65M12, 35L50, 35L65, 76M10
\end{AMS}

%%%%%%%%%%%%%%%%%%%%%%%%%%%%%%%%%%%%%%%%%%%%%%%%%%%%%%%%%%%%%%%%%%%%%%%%%%%%%%%%

%%%%%%%%%%%%%%%%%%%%%%%%%%%%%%%%%%%%%%%%%%%%%%%%%%%%%%%%%%%%%%%%%%%%%%%%%%%%%%%%
%%%%%%%%%%%%%%%%%%%%%%%%%%%%%%%%%%%%%%%%%%%%%%%%%%%%%%%%%%%%%%%%%%%%%%%%%%%%%%%%
%%%%%%%%%%%%%%%%%%%%%%%%%%%%%%%%%%%%%%%%%%%%%%%%%%%%%%%%%%%%%%%%%%%%%%%%%%%%%%%%

\section{Introduction}\label{sec:introduction}
The development of robust, accurate and efficient discretization techniques
for the shallow water equations (and variations thereof) is an important
task for the field of geosciences. These models are widely used for
applications in coastal hydraulics, in-land flooding and climate
prediction. Using numerical methods that are accurate in the temporal and
spatial domain is desirable for such applications as long-time simulations
spanning longer than 24 hours are common. Robustness is also an important
criterion for such methods. Here, we say that a method is robust if it can
handle dry states and if it can preserve important equilibrium states which
could be either the rest state~\citep{bermudez_vazquez_94,
  greenberg_leroux_96} or time-independent solutions with nonzero
velocity~\citep{Noelle_Xing_Shu_2007, Shu_Xing_2014,
  Chertock_Kurganov_2015}. Numerical methods that preserve such equilibrium
states are said to be well-balanced. The reader is referred to the book
of~\cite{bouchut_2004} for a review on issues related to well-balancing.
Finally, to be useful for practitioners, numerical methods for solving the
shallow water equations must be versatile; in particular, they must be able to
handle unstructured meshes. But, achieving robustness with respect to dry states,
well-balancing, and high-order accuracy in space and time
on unstructured meshes is challenging. The task becomes even more
complex when external source terms besides topography are added.

Developing well-balanced methods that are robust with respect to dry
states is an active topic of research; see~\citep{audusse_etal_2004, Bollermann_2011,
  Gallardo2007574, Kurganov_Petrova_2007, Perthame_Simeoni_2001,
  Ricchiuto_Bollermann_2009}.  Recent work on high-order schemes for
the shallow water equations without external source terms that are
well-balanced and robust with respect to dry states has been proposed
in~\citep{castro2019third} for finite volumes +
central WENO on structured meshes and in~\citep{hajduk2022bound}
for continuous finite elements on
unstructured meshes.
The reader is also referred to~\citep{liang2009numerical,
  Duran_Marche_Turpault_Berthon_2015,
  Chertock_Kurganov_2015,Guermond_2018_SISC}
for other recent works that consider the
inclusion of external source terms such as friction and rain effects.
With the advancement of
computing architectures, there has also been some development on
efficient implementation of numerical methods for the shallow water
equations; see \eg~\citep{brodtkorb2012efficient,Dietrich_etal_2011,delmas2022multi,
  caviedes2023serghei}.

The goal of this work is to present an explicit approximation of the
shallow water equations with topography and external sources that is
well-balanced and high-order accurate in space and time.  Our
theoretical and algorithmic work is supplemented with a high
performance implementation suitable for high fidelity simulations that
is made freely available as part of the \texttt{ryujin}
project~\citep{ryujin-2021-1,ryujin-2021-3}.\footnote{\url{https://github.com/conservation-laws/ryujin}}
The purpose of this work is to provide a stepping stone for
various multi-physics extensions of the shallow water equations
that require an implicit-explicit (IMEX) time
discretization. Such variations include the Serre-Green-Naghdi
Equations \citep{green1974theory, serre_1953} for dispersive water
waves and the coupling of the shallow water equations to subsurface
models such as Richard's equation.
The starting point for this work are the
approximation techniques introduced in~\citep{azerad_2017,
  Guermond_2018_SISC} for the shallow water equations. Unfortunately,
the methodology discussed in~\citep{azerad_2017, Guermond_2018_SISC}
has two drawbacks: \textup{(i)} the high-order spatial approximation
is not fully well-balanced when a shoreline is present; that is to
say, the shoreline must coincide with the mesh for the method to
maintain well-balancing. \textup{(ii)} the time-discretization is
limited in accuracy and efficiency due to the use of explicit
\emph{strong stability preserving} (SSP) explicit Runge-Kutta (ERK)
methods which are known to be limited to fourth-order accuracy
(see~\cite[Thm.~4.1]{ruuth2002two}) and generally have an
\emph{efficiency ratio} significantly smaller than
one~\citep[Def.\,1.1]{Ern_Guermond_2022}.  Here, we provide solutions
to these drawbacks. Specifically, we revisit the low-order method
proposed in \citep[\S3]{azerad_2017} and construct a high-oder version
thereof that is unconditionally well-balanced and more robust with
respect to dry states than the ones outlined
in~\citep[\S4]{azerad_2017}
and~\citep[\S5\&6]{Guermond_2018_SISC}. The novelty of the spatial
discretization introduced in this work is threefold: \textup{(i)} we
introduce modified auxiliary states (see
Eq.~\eqref{eq:auxiliary_states}) that act as local Riemann averages
for hydrostatic reconstructed left/right states; \textup{(ii)} we
rewrite the low-order method as a convex combination of these
auxiliary states and external source terms (see
Lemma~\ref{Lem:low_order_stability}); \textup{(iii)} we introduce
novel local bounds in space and time that control the updated velocity
thereby avoiding blow-up and unnecessary  time-step restrictions (see
Lemma~\ref{Lem:local_bounds}). These modifications allow the final
limited updated to be high-order accurate in space and time,
invariant-domain preserving and well-balanced \wrt~rest states
without any restrictions on the underlying mesh.

The paper is organized as follows. We briefly present the mathematical
model and relevant properties in Section~\ref{sec:model_problem}. In
Section~\ref{sec:forward_euler}, we introduce a discretization-independent
high-order spatial approximation to the shallow water equations with
forward Euler time stepping and a convex limiting procedure. The main
results of this section are Lemma~\ref{Lem:low_order_stability},
Proposition~\ref{prop:entropy_inequality}, Proposition~\ref{prop:IDP} and
Proposition~\ref{prop:well-balancing}. Then, using the convex limiting
methodology used for high-order spatial discretization, we introduce in
Section~\ref{sec:erk_IDP} a high-order in time invariant-domain
preserving (IDP) explicit Runge-Kutta (ERK) method. Finally, we verify and
validate the numerical method in Section~\ref{sec:illustrations}. For the
sake of completeness, we detail the implementation of boundary conditions
for our method in Appendix~\ref{sec:boundary_conditions}.

%%%%%%%%%%%%%%%%%%%%%%%%%%%%%%%%%%%%%%%%%%%%%%%%%%%%%%%%%%%%%%%%%%%%%%%%%%%%%%%%
%%%%%%%%%%%%%%%%%%%%%%%%%%%%%%%%%%%%%%%%%%%%%%%%%%%%%%%%%%%%%%%%%%%%%%%%%%%%%%%%
%%%%%%%%%%%%%%%%%%%%%%%%%%%%%%%%%%%%%%%%%%%%%%%%%%%%%%%%%%%%%%%%%%%%%%%%%%%%%%%%

\section{The model problem}\label{sec:model_problem}
Let $D$ be a polygonal domain in $\polR^d$, $d \in \{1,2\}$, occupied by a
body of water whose evolution in time under the action of gravity is
modeled by the shallow water equations (also known as the Saint-Venant equations). Let $\bx\in D$ be the position
vector and $t > 0$ be the time variable. Let $\bu\eqq(\waterh,
  \bq)\tr\in\Real^{d+1}$ be the dependent variable of the system where
$\waterh(\bx, t)$ is the water depth and $\bq(\bx, t)\in\Real^d$ is the
depth-averaged momentum vector of the fluid, also called discharge. Let
$z(\bx)$ be the known topography mapping.
We henceforth assume that $z$ is in $W^{1,\infty}(\Dom;\Real)$ to
make sure that $\GRAD z$ is a bounded function and thereby avoiding the
need of properly defining $\waterh\GRAD z$ when $z$ is discontinuous. The
goal of this work is to solve the following system of partial differential
equations in the weak sense:
\begin{subequations}\label{swes}
  \begin{align}
    \begin{aligned}
       & \partial_t \bu +\DIV \polf(\bu) = \polb(\bu,z(\bx)) &
       & \text{for \ae\ }t>0,\,\bx\in D,
      \\
       & \bu(\bx,0) = \bu_0(\bx)                             &   & \text{for \ae\ }\bx\in D,
    \end{aligned}
  \end{align}
\end{subequations}
with $\polf(\bu)\eqq(\bq,\bq\otimes \bv+\frac12g\waterh^2\polI_d)\tr$ and
$\polb(\bu,z(\bx))\eqq (0, - g \waterh\GRAD z)\tr$ where
$\bv\eqq\waterh^{-1}\bq\in\polR^d$ is the (depth-averaged) velocity vector,
$g$ is the gravitational acceleration constant, and
$\polI_d\in\polR^{d\times d}$ is the identity matrix.  For the sake of
completeness, we state a few properties regarding~\eqref{swes} that will be
useful when constructing \emph{physically relevant} approximations at the
discrete level.
\begin{definition}[Invariant set]
  The following convex domain is an \emph{invariant set} (in the sense of
  \citep[Def.\,2.3]{guermond2016invariant}) for the shallow water
  equations~\eqref{swes}:
  \begin{equation}\label{invariant_domain}
    \calA\eqq\big\{\bu=(\waterh,\bq)\tr \in \Real^{d+1}\mid \waterh > 0
    \big\}.
  \end{equation}
\end{definition}
When the fluid is at rest, \ie~$\bq\equiv \bm{0}$, the Shallow Water
Equations~\eqref{swes} reduce to $g\waterh\GRAD(\waterh + z) = \bm{0}$,
which motivates to introduce the following terminology.
\begin{definition}[Problem at rest]\label{def:problem_at_rest}
  A solution $\bu(\bx,t)$ to the shallow water equations is said to
  be \emph{at rest} at time $t$ if
  \begin{align*}
    \begin{aligned}
      \bq(x,t)           & =\bzero        & \; & \text{for \ae\ }\bx\in D,\quad\text{and}
      \\
      (\waterh+z)(\bx,t) & =\text{const.} & \; & \text{\ae\ on connected
        components of }\big\{\bx\in D\;:\; \waterh(\bx,t) > 0\big\}.
    \end{aligned}
  \end{align*}
  \vspace{-1em}
\end{definition}
\begin{definition}[Entropy pairs and entropy solutions]
  The pair $\big(E(\bu), \bF(\bu)\big)$:
  \begin{align}\label{eq:entropy_pair}
     & E(\bu)\eqq \tfrac12g\waterh^2 + \tfrac12\waterh\|\bv\|^2 + g z\waterh, &  &
    \bF(\bu)\eqq\bv\big(E(\bu) + \tfrac12g\waterh^2\big),
  \end{align}
  is an entropy pair for the
  shallow water equations~\eqref{swes} \ie~it satisfies $\DIV\bF(\bu) =
    \big(\GRAD_\bu E(\bu)\big)\tr(\DIV\polf(\bu) - \polb(\bu,z(\bx)))$. We
  call $\bu(\bx,t)$ an \emph{entropy solution} to~\eqref{swes} if it is a
  weak solution to~\eqref{swes} and additionally satisfies the following
  inequality in the weak sense: $\partial_t E(\bu) + \DIV\bF(\bu) \le 0$.
\end{definition}
\begin{remark}[Entropy pair for flat topography]
  When there is no influence due to topography ($z(\bx)\equiv 0$), the
  entropy pair~\eqref{eq:entropy_pair} simplifies to:
  \begin{align}\label{eq:entropy_flat}
     & E_{\text{flat}}(\bu)\eqq \tfrac12g\waterh^2 + \tfrac12\waterh\|\bv\|^2, &  &
    \bF_{\text{flat}}(\bu)\eqq\bv\big(E_{\text{flat}}(\bu) + \tfrac12g\waterh^2\big).
  \end{align}
  and satisfies $\DIV\bF_{\text{flat}}(\bu) = \big(\GRAD_\bu E_{\text{flat}}(\bu)\big)\tr\DIV\polf(\bu)$.
\end{remark}

For various applications, the system~\eqref{swes} is augmented with external
source terms. Some examples include forcing due to bottom friction and
source/sink of the water depth~\citep{Chertock_Kurganov_2015}, Coriolis force~\citep{chertock2018well} and many others. In this
work, we only consider a simple time-independent source due to
rainfall and and the loss of
discharge due to the Gauckler-Manning friction force:
\begin{equation}\label{eq:source_terms}
  \bm{S}(\bu) = (R(\bx), -g n^2\waterh^{-\tfrac43}\bq\|\bv\|_{\ell^2})\tr,
\end{equation}
where $R(\bx) > 0$ and $n$ is
the Gauckler-Manning roughness coefficient.

%%%%%%%%%%%%%%%%%%%%%%%%%%%%%%%%%%%%%%%%%%%%%%%%%%%%%%%%%%%%%%%%%%%%%%%%%%%%%%%%
%%%%%%%%%%%%%%%%%%%%%%%%%%%%%%%%%%%%%%%%%%%%%%%%%%%%%%%%%%%%%%%%%%%%%%%%%%%%%%%%
%%%%%%%%%%%%%%%%%%%%%%%%%%%%%%%%%%%%%%%%%%%%%%%%%%%%%%%%%%%%%%%%%%%%%%%%%%%%%%%%

\section{Well-balanced forward Euler method}\label{sec:forward_euler}
In this section, we introduce a forward Euler approximation to the shallow
water equations that is high-order accurate in space, well-balanced and
invariant-domain preserving. This section lays the foundation for the
high-order explicit Runge-Kutta methodology introduced in
Section~\ref{sec:erk_IDP}.

%%%%%%%%%%%%%%%%%%%%%%%%%%%%%%%%%%%%%%%%%%%%%%%%%%%%%%%%%%%%%%%%%%%%%%%%%%%%%%%%
\subsection{Approximation details}\label{sec:space_approximation}
Let $(0,T)$ be a chosen time interval for~\eqref{swes}.
Let$\{t^n\}_{n\in\intset{0}{N}}$ be a discretization of $(0,T)$ with the
convention that $N\ge 1$, $t^0 = 0$, and $t^{N} = T$. The spatial
approximation is discretization independent and can be either finite
differences, finite volumes, continuous or discontinuous finite elements.
Letting $t^n$ be the current discrete time, we assume that the spatial
approximation of $\bu(\cdot, t^n)$ is entirely defined by the collection of
states $\bsfU^n\eqq\{\bsfU_i^n\}_{i\in\calV}$, where $\calV =
\intset{1}{I}$ is the index set for the spatial degrees of freedom and
$\bsfU_i^n\eqq(\sfH_i^n, \bsfQ_i^n)\tr\in\polR^{d+1}$. Here, $\sfH_i^n$ and
$\bsfQ_i^n$ represent approximations of the water depth and discharge
associated with the $i$-th degree of freedom at time $t^n$. To be able to
refer to the water depth and the discharge of an arbitrary state $\tilde
\bsfU=(\tilde \sfH,\tilde \bsfQ)$ in $\Real^{d+1}$, we introduce the linear
mappings $\sfH: \Real^{d+1}\to \Real$ and $\bsfQ: \Real^{d+1}\to \Real^d$
so that $\sfH(\tilde\bsfU) = \tilde\sfH$ and $\bsfQ(\tilde\bsfU)=
\tilde\bsfQ$. We assume that the topography mapping is approximated by the
collection of states: $\sfZ\eqq\{\sfZ_i\}_{i\in\calV}$. We assume that for
every $i\in\calV$, there exists a subset $\calI(i)\subsetneq\calV$ that
collects the local degrees of freedom that interact with $i$, which we call
stencil at $i$. Let $\calI^{*}(i)\eqq \calI(i){\setminus}\{i\}$. We assume
that the underlying spatial discretization provides the following three
quantities for all $i\in\calV$ and all $j\in\calI(i)$:
\begin{enumerate}[font=\upshape,label=(\roman*)]
  \item
    An invertible low-order mass matrix $\polM_{ij}\upL = m_i\delta_{ij}$
    where $m_i > 0$ is called the mass associated with the $i$-th degree of
    freedom;
  \item
    An invertible, symmetric high-order matrix with entries $\polM_{ij}\upH
    = m_{ij}$ such that: $(\polM\upH\sfX)_i = \ssum m_{ij}\sfX_j$ for all
    $\sfX\in\polR^{I}$;
  \item
    A vector $\cij\in\polR^d$ that approximates the gradient operator:
    $\GRAD \sfX(t^n)\approx\ssum\sfX_j^n\cij$.
\end{enumerate}
The local stencil $\calI(i)$ is more precisely defined by
$(j\notin\calI(i))\implies (\cij = \bm{0}$ and $m_{ij}=0$). We further
assume that $\cij = -\bc_{ji}$ whenever $i$ or $j$ is not a boundary degree
of freedom, and $\ssum\cij = 0$ which is necessary for mass conservation.
The high-order mass matrix is related to the low-order mass matrix through
the relation $m_i = \ssum m_{ij}$ to guarantee that $\polM\upL$ and
$\polM\upH$ carry the same mass: $\sum_{i\in\calV} m_i \bsfU_i^n =
\sum_{i\in\calV}\ssum m_{ij}\bsfU_j^n$. Examples of discretization
techniques satisfying the above assumptions are described
in~\citep{guermond2019invariant}.

%%%%%%%%%%%%%%%%%%%%%%%%%%%%%%%%%%%%%%%%%%%%%%%%%%%%%%%%%%%%%%%%%%%%%%%%%%%%%%%%
\subsection{Well-balancing preliminaries}\label{sec:well_balancing}
Before introducing the low- and high-order spatial approximations, we first
define the discrete velocity and what it means to be well-balanced with
respect to rest states. Given a state $(\sfH_i^n,\bsfQ_i^n)\tr$ with
nonzero water depth, $\sfH_i^n>0$, the velocity is defined to be the ratio
$\bsfQ_i^n/\sfH_i^n$. To be robust with respect to dry states, we adopt the
regularization technique from~\cite{Kurganov_Petrova_2007} that avoids the
division by zero when $\sfH_i^n\rightarrow0$. For this purpose, we
introduce a small dimensionless parameter $\epsilon$ and a characteristic
length scale $\waterh_{\max}$ that scales like the average of the water
depth in the problem, and we set
\begin{equation}
  \bsfV_i^n\eqq\frac{2\sfH_i^n}{(\sfH_i^n)^2+
  \max(\sfH_i^n,\epsilon\waterh_{\max})^2}\bsfQ_i^n.
\end{equation}
Notice that $\bsfV_i^n =\bsfQ_i^n/\sfH_i^n$ when $\sfH_i^n\ge
\epsilon\waterh_{\max}$; that is, the regularization is active only when
$\sfH_i^n\le \epsilon\waterh_{\max}$. All the numerical simulations
reported in the paper are done with $\epsilon=\epssw$ and using double
precision floating point arithmetic. The well-balancing techniques in this
work adopt the methodology proposed
in~\cite{audusse_etal_2004,Audusse_Bristeau_2005} known as
\emph{hydrostatic reconstruction} of the water depth.
\begin{definition}[Hydrostatic reconstruction]
  For $i\in\calV$ and $j\in\calI(i)$, the hydrostatic reconstruction
  between $i$ and $j$ of the state $\bsfU_{i}$ and the associated water depth $\sfH_{i}$
 are defined as follows:
  \begin{subequations}
    \begin{align}
      \sfH_{i}^{j,*} & \eqq \max(0,\sfH_i+\sfZ_i-\max(\sfZ_i,\sfZ_j)),
      \label{def_of_Hstar}
      \\
      \bsfU_i^{j,*}  & \eqq \begin{pmatrix}\sfH_{i}^{*,j} \\\bsfV_i
      \sfH_{i}^{j,*}\end{pmatrix}.
      \label{def_of_Ustar}
    \end{align}
  \end{subequations}
\end{definition}
\begin{definition}[Discrete states at rest]
  \label{Def:Rest_at_large}
  A given set of discrete numerical states $\{(\sfH_i^n,
  \bsfQ_i^n)\}_{i\in\calV}$ is said \emph{to be at rest} if the approximate
  momentum $\bsfQ_i^n$ is zero for all $i\in\calV$, and if the approximate
  water depth $\sfH_i^n$ and the approximate bathymetry map $\sfZ_i$ satisfy
  the following property for all $i\in\calV$: $\Histar = \Hjstar$ for all
  $j\in\calI(i)$.
\end{definition}
\begin{remark}
  Note that we have adopted the condition $\Histar = \Hjstar$ to be
  the discrete analog to $(\waterh + z)(\bx, t) = const.$ in
  Def.~\ref{def:problem_at_rest} instead of the natural looking
  identity $\sfH_i\upn + \sfZ_i = \sfH_j + \sfZ_j$. The reason behind
  this choice is the fact that Def.~\ref{Def:Rest_at_large} does not
  need to distinguish between dry and wet states. For example, assume
  that $\sfH_j = 0$ and $\sfZ_i >\sfZ_j$ for all $j\in\calI(i)$.
  Then, $\Histar = \max(0, 0 + \sfZ_i - \sfZ_i) = 0$ and
  $\Hjstar = \max(0, 0 + \sfZ_j - \sfZ_i) = 0$, which gives
  $\Histar = \Hjstar$. However, the condition
  $\sfH_i\upn + \sfZ_i = \sfH_j + \sfZ_j$ breaks down in this case since
  $\sfH_j = 0$ for all $j\in\calI(i)$ would imply that the topography
  mapping should be constant which is not the case.
\end{remark}

%%%%%%%%%%%%%%%%%%%%%%%%%%%%%%%%%%%%%%%%%%%%%%%%%%%%%%%%%%%%%%%%%%%%%%%%%%%%%%%%
\subsection{Low-order spatial approximation}\label{sec:low_order}
We now discuss a low-order method that will serve as safeguard to the
high-order method. We essentially follow~\cite[Sec.~3]{azerad_2017}
which introduced a formally first-order consistent approximation of
the shallow water equations when the spatial discretization consists
of continuous, linear finite elements. But departing from
\cite{azerad_2017} we introduce a different definition of
  the \emph{auxiliary states} (see Eq.~\ref{eq:auxiliary_states}) and
  introduce a different convex combination of theses auxiliary states
  to reconstruct the low order update; see
Lemma~\ref{Lem:low_order_stability}.

Let $t^n$  be the current time and let $\dt\eqq t^{n+1}-t^n$ denote the
current time step. The low-order approximation with forward Euler
time-stepping is then given by:
\begin{subequations}\label{low_order}
  \begin{align}
    \label{eq:low_order_scheme}
    & \frac{m_i}{\tau}(\bsfU_i\upLnp - \bsfU_i^n) = \ssum\FL_{ij},
    \\
    \label{eq:low_order_flux}
    &\begin{aligned}
      \FL_{ij} \eqq{} & -\left(\!\Ujstar\otimes \bsfV_j^n+
      \Uistar\otimes\bsfV_i^n\!\right)\cij + d_{ij}\upLn\!\left(\!\Ujstar -
      \Uistar\!\right)
      \\
      & - \begin{pmatrix}
        0
        \\
        g \cij \left(\frac12(\Hjstar)^2 - \frac12(\Histar)^2+
        (\sfH_i^n)^2\right)
      \end{pmatrix}.
    \end{aligned}
  \end{align}
\end{subequations}
Here, $d_{ij}\upLn\geq0$ is the graph-viscosity coefficient that makes the
method invariant-domain-preserving:
\begin{equation}
    d_{ij}\upLn\eqq\max\Big(\lambda_{\max}(\Uistar, \Ujstar, \bn_{ij})\|\cij\|_{\ell^2},
    \lambda_{\max}(\Ujstar, \Uistar, \bn_{ji})\|\cji\|_{\ell^2},\Big),
\end{equation}
where $\bn_{ij}\eqq \bc_{ij}/\|\bc_{ij}\|_{\ell^2}$ and
$\lambda_{\max}(\Uistar, \Ujstar, \bn_{ij})$ is a guaranteed upper bound on
the maximum wave speed in the local Riemann problem with left and right
states $(\Uistar,\Ujstar)$ and flux $\polf(\cdot)\bn_{ij}$. Analytical expressions for $\lambda_{\max}$ are
given in~\citep[Lem.~3.8]{azerad_2017}. Note that $d_{ij}\upLn =
d_{ji}\upLn$ which is necessary for mass conservation. By convention, we
set $d_{ii}\upLn\eqq-\snsum d_{ij}\upLn$. Observe that $\bsfF_{ij}\upLn =
-\bsfF_{ji}\upLn$ when the topography is flat.
\begin{lemma}[Well-balancing and conservation]
    \label{lem:low-order-WB}
    The scheme $\bsfU^n\mapsto \bsfU\upLnp$ defined in~\eqref{low_order} is
    mass-conservative and well-balanced.
\end{lemma}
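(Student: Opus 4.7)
The lemma makes two independent claims which I would establish separately, each by a direct substitution into~\eqref{eq:low_order_flux}. For mass conservation, I would isolate the first (water-depth) component of $\FL_{ij}$, namely $-(\Hjstar\,\bsfV_j^n + \Histar\,\bsfV_i^n)\cdot\cij + d_{ij}\upLn(\Hjstar - \Histar)$, and verify that it is antisymmetric under the swap $i\leftrightarrow j$. This follows at once from the standing hypotheses $\cij=-\cji$ (for non-boundary pairs) and $d_{ij}\upLn = d_{ji}\upLn$, since the swap simultaneously exchanges $\Histar\leftrightarrow\Hjstar$ and $\bsfV_i^n\leftrightarrow\bsfV_j^n$. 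Summing~\eqref{eq:low_order_scheme} over $i$ then telescopes to $\sum_i m_i\sfH_i\upLnp = \sum_i m_i\sfH_i^n$.

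For well-balancing, I would plug the discrete-rest hypotheses of Def.~\ref{Def:Rest_at_large} (that is, $\bsfV_i^n=\bzero$ for all $i$, together with $\Histar = \Hjstar$ for every $i$ and every $j\in\calI(i)$) into the three summands of $\FL_{ij}$ and check that $\sum_{j\in\calI(i)}\FL_{ij} = \bzero$ for every $i$. The advective term $-\bigl(\Ujstar\otimes\bsfV_j^n + \Uistar\otimes\bsfV_i^n\bigr)\cij$ vanishes termwise by the zero velocities; the graph-viscosity contribution $d_{ij}\upLn(\Ujstar-\Uistar)$ has a zero water-depth component thanks to $\Hjstar = \Histar$ and zero momentum components because those entries of $\Uistar$ and $\Ujstar$ carry overall factors of $\bsfV_i^n$ or $\bsfV_j^n$; and the remaining pressure correction collapses to $(0,-g\cij(\sfH_i^n)^2)\tr$ once the cancellation $(\Hjstar)^2 - (\Histar)^2 = 0$ is used.

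The one step I would single out as the crux of the argument, and the only place that is not a pairwise cancellation, is the summation of the surviving pressure remainder across the stencil: $\sum_{j\in\calI(i)} g\cij(\sfH_i^n)^2 = g(\sfH_i^n)^2\sum_{j\in\calI(i)}\cij$, which vanishes only by virtue of the global consistency relation $\sum_j\cij=\bzero$ assumed in Section~\ref{sec:space_approximation}. This is precisely why the authors' decision to place the non-antisymmetric $(\sfH_i^n)^2$ piece inside $\FL_{ij}$ is compatible with well-balancing at rest: the asymmetry is annihilated globally over the stencil rather than pairwise, and any proof strategy that tried to argue termwise would miss this point.
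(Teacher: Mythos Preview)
Your proposal is correct. The paper itself does not supply an independent argument for this lemma but simply defers to \cite[Prop.~3.9]{azerad_2017}; your direct verification---antisymmetry of the water-depth component of $\FL_{ij}$ for mass conservation, and termwise annihilation of $\ssum\FL_{ij}$ under the rest hypotheses of Def.~\ref{Def:Rest_at_large} (with the key stencil identity $\ssum\cij=\bzero$ disposing of the residual $(\sfH_i^n)^2$ pressure term)---is precisely the computation underlying that cited result, so there is nothing to add.
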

\begin{proof}
    See~\cite[Prop.~3.9]{azerad_2017}.
\end{proof}

We now introduce auxiliary states that are meant to be thought of as
averages of the self-similar solution to the local Riemann problem for
the pair $(\Uistar,\Ujstar)$ in the direction $\bn_{ij}$. We do this
to rewrite the low-order scheme~\eqref{low_order} as a convex
combination of these auxiliary states and source terms to extract
local bounds in space and time. These bounds are needed for the convex
limiting procedure described in Section~\ref{sec:convex_limiting}. The
importance of such auxiliary states for nonlinear conservation laws
has been established in the literature and we refer the reader
to~\cite{harten1983upstream, nessyahu1990non} and the references
therein.

Let $i\in\calV$. For every $j\in\calI(i)$, we define the following
auxiliary states for the pair $(i, j)$ as follows:
\begin{equation}
  \label{eq:auxiliary_states}
  \Ubar_{ij}^n\eqq \frac12\big\{\Uistar + \Ujstar\big\}
  - \frac{1}{2 d_{ij}\upLn}\big\{\polf(\Ujstar) - \polf(\Uistar)\big\}\cij,
\end{equation}
with the convention that $\Ubar_{ii}^n\eqq\bsfU_i^n$.  We recall
  that $\Ubar_{ij}^n$ coincides with the exact space average over $[-\frac12,\frac12]$ at time
  $\frac{\|\bc_{ij}\|_{\ell^2}}{2 d_{ij}\upLn}$ of the solution to the
Riemann problem with left and right states $(\Uistar,\Ujstar)$ and
with flux $\polf(v)\bn_{ij}$ provided the viscosity $ d_{ij}\upLn$ is
large enough so that $\lambda_{\max}(\Uistar,
\Ujstar, \bn_{ij})\|\cij\|_{\ell^2},\le d_{ij}\upLn$. Note that the bar states here differ from those
in~\citep[Prop.~3.11]{azerad_2017}. We also define an affine shift
needed for the convex combination update:
\begin{equation}\label{eq:affine}
  \BL_i\eqq\ssum\BL_{ij},\quad \BL_{ij}\eqq-2( d_{ij}\upLn +
  \bsfV_i^n\SCAL \bc_{ij})(\Uistar -\bsfU_i^n).
\end{equation}
\begin{lemma}[Stability]
  \label{Lem:low_order_stability}
  Assume that $\bsfU_i^n\in\calA$ for all $i\in\calV$. Assume also that the
  time step satisfies the restriction $\dt \le
  \min_{i\in\calV}\frac{m_i}{2|d_{ii}\upLn|}$. Then,
  \begin{enumerate}[font=\upshape,label=(\roman*)]
    \item
      \label{Item1:Lem:low_order_stability}
      The following convex combination holds:
      \begin{equation}
        \label{eq1:Lem:low_order_stability}
        \bsfU_i\upLnp =  \Big(1 + \frac{2\tau
        d_{ii}\upLn}{m_i}\Big)(\bsfU_i^n + \frac{\tau}{m_i}\BL_i) + \snsum
        \frac{2\tau d_{ij}\upLn}{m_i}(\Ubar_{ij}^n +
        \frac{\tau}{m_i}\BL_i).
      \end{equation}
    \item
      \label{Item2:Lem:low_order_stability}
      The water depth of $\bsfU\upLnp$ is positive, \ie
      $\bsfU_i\upLnp\in\calA$ for all $i\in\calV$.
    \end{enumerate}
\end{lemma}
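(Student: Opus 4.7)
For part~\ref{Item1:Lem:low_order_stability}, I would verify~\eqref{eq1:Lem:low_order_stability} by direct algebra. Expanding the right-hand side and using $d_{ii}\upLn = -\snsum d_{ij}\upLn$ together with $\BL_i = \ssum\BL_{ij}$ (and $\BL_{ii}=0$, which holds since $\bsfU_i^{i,*}=\bsfU_i^n$) reduces the claim to proving the flux identity
\begin{equation*}
  \ssum \FL_{ij} \;=\; \BL_i + \sum_{j\in\calI^*(i)} 2 d_{ij}\upLn\bigl(\Ubar_{ij}^n - \bsfU_i^n\bigr).
\end{equation*}
For a single $j\in\calI^*(i)$, inserting the definitions~\eqref{eq:auxiliary_states} and~\eqref{eq:affine} gives after simplification
\begin{equation*}
  \BL_{ij} + 2 d_{ij}\upLn(\Ubar_{ij}^n - \bsfU_i^n) = d_{ij}\upLn(\Ujstar-\Uistar) - 2(\bsfV_i^n\SCAL\cij)(\Uistar-\bsfU_i^n) - (\polf(\Ujstar)-\polf(\Uistar))\cij.
\end{equation*}
The decisive step is the elementary identity $\polf(\Uistar)\cij = (\bsfV_i^n\SCAL\cij)\Uistar + (0,\tfrac12 g(\Histar)^2\cij)\tr$ (and analogously for $\Ujstar$), which follows from $\bsfQ(\Uistar)=\Histar\bsfV_i^n$; using it, the previous expression equals $\FL_{ij} + 2(\bsfV_i^n\SCAL\cij)\bsfU_i^n + (0,g(\sfH_i^n)^2\cij)\tr$. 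Summing over $j\in\calI^*(i)$ and invoking $\ssum\cij=\bm{0}$ (so that $\sum_{j\neq i}\cij=-\bc_{ii}$), the residual collapses exactly to $\FL_{ii}$, completing the identity.

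For part~\ref{Item2:Lem:low_order_stability}, the idea is to exploit the convex structure supplied by (i). The CFL hypothesis $\tau \le m_i/(2|d_{ii}\upLn|)$ ensures $1+2\tau d_{ii}\upLn/m_i \ge 0$; combined with $d_{ij}\upLn\ge 0$ and $d_{ii}\upLn+\snsum d_{ij}\upLn=0$, the coefficients on the right-hand side of~\eqref{eq1:Lem:low_order_stability} form a non-negative partition of unity, so $\bsfU_i\upLnp$ is a genuine convex combination of the shifted states $\bsfU_i^n + \tfrac{\tau}{m_i}\BL_i$ and $\{\Ubar_{ij}^n + \tfrac{\tau}{m_i}\BL_i\}_{j\in\calI^*(i)}$. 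By linearity of the first-component functional, it suffices to prove that the water depth of each of these shifted states is non-negative. Now $\sfH_i^n > 0$ by assumption; $\sfH(\Ubar_{ij}^n)\ge 0$ because $\Ubar_{ij}^n$ coincides with the exact space average of the Riemann solution for the hydrostatic-reconstructed pair $(\Uistar,\Ujstar)$---valid under the lower bound built into $d_{ij}\upLn$---and the Riemann problem preserves water-depth non-negativity. Finally $\sfH(\BL_i) = -2\ssum (d_{ij}\upLn + \bsfV_i^n\SCAL\cij)(\Histar-\sfH_i^n) \ge 0$, because $\Histar\le\sfH_i^n$ by the definition of hydrostatic reconstruction and $d_{ij}\upLn + \bsfV_i^n\SCAL\cij \ge 0$, the latter following from $d_{ij}\upLn\ge \lambda_{\max}(\Uistar,\Ujstar,\bn_{ij})\|\cij\|_{\ell^2}\ge |\bsfV_i^n\SCAL\cij|$.

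The principal obstacle is the algebraic bookkeeping in part~(i): the coefficient $(\sfH_i^n)^2$ appearing in the pressure correction of $\FL_{ij}$ (rather than, say, $(\Histar)^2$) and the precise form of the affine shift $\BL_i$ are engineered so that, after summation over $j$ via $\ssum\cij=\bm{0}$, the leftover exactly absorbs the diagonal contribution $\FL_{ii}$. Once this identity is established, part~(ii) follows almost immediately because $\BL_i$ has been \emph{tailored} so that its water-depth component is non-negative.
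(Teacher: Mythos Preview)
Your proposal is correct and follows essentially the same approach as the paper. The only difference is organizational: you carry out the flux identity per $j\in\calI^*(i)$ and then sum (showing the residual $2(\bsfV_i^n\SCAL\cij)\bsfU_i^n+(0,g(\sfH_i^n)^2\cij)\tr$ collapses to $\FL_{ii}$ via $\sum_{j\ne i}\cij=-\bc_{ii}$), whereas the paper sums over all of $\calI(i)$ from the outset, invoking $\ssum\cij=\bm0$ directly to obtain $\ssum(\FL_{ij}-\BL_{ij})=\ssum 2d_{ij}\upLn\Ubar_{ij}^n$. Both routes hinge on the same elementary identity $\polf(\Uistar)\cij=(\bsfV_i^n\SCAL\cij)\Uistar+(0,\tfrac12 g(\Histar)^2\cij)\tr$, and for part~(ii) both use $\sfH(\BL_i)\ge 0$ (from $\Histar\le\sfH_i^n$ and $d_{ij}\upLn+\bsfV_i^n\SCAL\cij\ge 0$, the latter being \cite[Prop.~3.7]{azerad_2017}) together with $\sfH(\Ubar_{ij}^n)\ge 0$ from the Riemann-average interpretation.
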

\begin{proof}
  In order to show \ref{Item1:Lem:low_order_stability}
  we add and subtract $\BL_i$
  in~\eqref{low_order} and then rearrange:
  \begin{align*}
    \frac{m_i}{\tau}(\bsfU_i\upLnp - \bsfU_i^n)
    & =  \BL_i+\ssum\left[\FL_{ij} - \BL_{ij}\right].
  \end{align*}
  Recalling that $\polf(\bu)\eqq(\bq,\bq\otimes
  \bv+\frac12g\waterh^2\polI_d)\tr$ and $\ssum\bc_{ij}=\bzero$, we have
  \begin{align*}
    \ssum \FL_{ij}  - \BL_{ij}
    & = \ssum -\{\polf(\Ujstar) - \polf(\Uistar)\}\cij + \dij\upLn\{\Uistar
    + \Ujstar\},
    \\
    & =  \ssum 2 \dij\upLn \Ubar_{ij}^n.
  \end{align*}
  Then, recalling that $\Ubar_{ii}^n=\bsfU_i^n$, and $\ssum \dij\upLn=0$
  holds true by virtue of the definition of $d_{ii}\upLn$, we infer that
  \begin{align*}
    \bsfU_i\upLnp & = \bsfU_i^n + \frac{\tau}{m_i}\BL_i + \ssum \frac{2\tau
    \dij\upLn}{m_i} \Ubar_{ij}^n,
    \\
    & =  \bsfU_i^n + \frac{\tau}{m_i}\BL_i + \frac{2\tau d_{ii}\upLn}{m_i}
    \bsfU_{i}^n + \snsum \frac{2\tau \dij\upLn}{m_i} \Ubar_{ij}^n,
    \\
    & =  \bsfU_i^n + \frac{\tau}{m_i}\BL_i + \frac{2\tau d_{ii}\upLn}{m_i}
    (\bsfU_{i}^n + \frac{\tau}{m_i}\BL_i) + \snsum \frac{2\tau
    \dij\upLn}{m_i} (\Ubar_{ij}^n + \frac{\tau}{m_i}\BL_i),
    \\
    & =\Big(1 + \frac{2\tau d_{ii}\upLn}{m_i}\Big)(\bsfU_i^n +
    \frac{\tau}{m_i}\BL_i) + \snsum \frac{2\tau
    d_{ij}\upLn}{m_i}(\Ubar_{ij}^n + \frac{\tau}{m_i}\BL_i).
  \end{align*}
  The above decomposition is a genuine convex combination under
  the CFL condition $1 + \frac{2\tau |d_{ii}\upLn|}{m_i}\geq 0$.

  For \ref{Item2:Lem:low_order_stability} we recall that the water depth
  of the sate $\bsfB_i\upLn$ is given by
  \begin{align*}
    \sfH(\bsfB_i\upLn) \eqq \ssum 2( d_{ij}\upLn + \bsfV_i^n\SCAL
    \bc_{ij})(\sfH_i^n - \Histar).
  \end{align*}
  It is established in~\citep[Prop.~3.7]{azerad_2017} that
  $d_{ij}\upLn+\bsfV_i^n\SCAL\bc_{ij}\ge 0$ and by definition we have
  $(\sfH_i^n - \Histar)\ge 0$. Hence, $\sfH(\bsfB_i\upLn) > 0$. Moreover,
  the definition of $d_{ij}\upLn$ implies that $\sfH(\Ubar_{ij}^n)\ge 0$
  for all $j\ne i\in\calI^*(i)$. The assertion now
  follows directly from the combination
  \eqref{eq1:Lem:low_order_stability} which is convex provided the CFL time step
  restriction holds true.
\end{proof}

\begin{remark}[Source terms]
  In order to incorporate external source terms into the low-order scheme
  we augment \eqref{eq:low_order_scheme} with a suitable low-order
  approximation of $\bS(\bu)$ given by~\eqref{eq:source_terms}:
  \begin{align}
    \notag
    \frac{m_i}{\tau} &(\bsfU_i\upLnp - \bsfU_i^n) = \ssum\FL_{ij}
    + m_i \bS_i^n,
    \quad\text{with}
    \\
    \label{eq:source_terms_discrete}
    \bS_i^n  \eqq{}& \left(R(\ba_i), -g n^2 (\mathcal{H}_i^n)^{-1}
    \bQ_i^n \|\bsfV_i\|_{\ell^2} \right)\tr,
    \\
    &\text{where\ }
    \mathcal{H}_i^n \eqq \frac12\left[(\sfH_i^n)^{4/3} +
    \max((\sfH_i^n)^{4/3}, 2gn^2\tau\|\bsfV_i^n\|_{\ell^2})\right],\notag 
  \end{align}
  is introduced for regularizing the term $\waterh^{-4/3}$ as in \citep[Eq.~(3.3)]{Guermond_2018_SISC}. Here,
  $\ba_i$ denotes a collocation point associated with the $i$th degree
  of freedom. Note that the results in Lemma~\ref{Lem:low_order_stability}
  still hold, provided that one replaces $\BL_i$ by a modified affine shift
  $\BL_i + m_i\bS_i^n$ throughout.
\end{remark}

\begin{proposition}[Entropy inequality]\label{prop:entropy_inequality}
  Assume that the time step satisfies the CFL condition
  $\tau\leq\min_{i\in\calV}\frac{m_i}{2\left|d\upLnp_{ii}\right|}$. Then,
  the low-order update satisfies the following discrete entropy inequality
  for every $i\in\calV$:
  \begin{multline}
    \label{eq:entropy_ineq_full}
    \frac{m_i}{\tau}\big(E_{\textup{flat}}(\bsfU_i\upLnp\big) -
    E_{\textup{flat}}(\bsfU_i\upn)\big) +
    \ssum\Big\{\big(\bF_{\textup{flat}}(\Ujstar) -
    \bF_{\textup{flat}}(\Uistar)\big)\cij -
    \\
    d_{ij}\upLn\big(E_{\textup{flat}}(\Ujstar) -
    E_{\textup{flat}}(\Uistar)\big)\Big\}
    \;\leq\; \GRAD_\bu E_{\textup{flat}}(\bsfU_i\upLn)\,\SCAL\,\BL_i
    \;+\;\mathcal{O}(\tau).
  \end{multline}
  When there is no influence due to topography, the entropy inequality
  reduces to:
  \begin{multline}
    \label{eq:entropy_ineq_flat}
    \frac{m_i}{\tau}\left(E_{\textup{flat}}(\bsfU_i\upLnp) - E_{\textup{flat}}(\bsfU_i\upn)\right)
    \\
    + \ssum\left\{\left(\bF_{\textup{flat}}(\bsfU_j) -
    \bF_{\textup{flat}}(\bsfU_i)\right)\cij -
    d_{ij}\upLn\left(E_{\textup{flat}}(\bsfU_j) -
    E_{\textup{flat}}(\bsfU_i)\right)\right\}\leq0
  \end{multline}
\end{proposition}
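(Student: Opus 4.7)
The plan is to combine three ingredients: the convex decomposition from Lemma~\ref{Lem:low_order_stability}\ref{Item1:Lem:low_order_stability}, the convexity of the flat entropy $E_{\textup{flat}}$, and the entropy inequality satisfied by the Riemann-average auxiliary states $\Ubar_{ij}^n$. First I would apply Jensen's inequality directly to~\eqref{eq1:Lem:low_order_stability} to obtain an upper bound for $E_{\textup{flat}}(\bsfU_i\upLnp)$ as a convex combination of $E_{\textup{flat}}$ evaluated at the shifted states $\bsfU_i^n + \tfrac{\tau}{m_i}\BL_i$ and $\Ubar_{ij}^n + \tfrac{\tau}{m_i}\BL_i$. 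A first-order Taylor expansion of each term in the common shift $\tfrac{\tau}{m_i}\BL_i$, combined with the observation that the neighbour weights $2\tau d_{ij}\upLn/m_i$ are $O(\tau)$ while the self-weight is $1+O(\tau)$, then extracts the single leading contribution $\tfrac{\tau}{m_i}\GRAD_\bu E_{\textup{flat}}(\bsfU_i^n)\SCAL\BL_i$ plus an $O(\tau^2)$ remainder originating in the quadratic part of $E_{\textup{flat}}$.

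Next I would use the Riemann-average interpretation of $\Ubar_{ij}^n$ recorded immediately below~\eqref{eq:auxiliary_states}: under the CFL-type condition $\lambda_{\max}\|\bc_{ij}\|_{\ell^2}\le d_{ij}\upLn$, the state $\Ubar_{ij}^n$ is the exact spatial average over $[-\tfrac12,\tfrac12]$ of the entropy solution to the one-dimensional Riemann problem with left/right states $(\Uistar,\Ujstar)$ and flux $\polf(\cdot)\bn_{ij}$ at time $\|\bc_{ij}\|_{\ell^2}/(2 d_{ij}\upLn)$. Combining the continuous entropy balance on the corresponding space-time slab with Jensen's inequality yields
\begin{equation*}
E_{\textup{flat}}(\Ubar_{ij}^n) \;\le\; \tfrac12\bigl[E_{\textup{flat}}(\Uistar) + E_{\textup{flat}}(\Ujstar)\bigr] \;-\; \tfrac{1}{2 d_{ij}\upLn}\bigl(\bF_{\textup{flat}}(\Ujstar) - \bF_{\textup{flat}}(\Uistar)\bigr)\SCAL\cij.
\end{equation*}
Substituting this into the Jensen bound, multiplying through by $m_i/\tau$, and using the identity $d_{ii}\upLn = -\snsum d_{ij}\upLn$ to regroup terms, I expect to obtain the left-hand side of~\eqref{eq:entropy_ineq_full} bounded by $\GRAD_\bu E_{\textup{flat}}(\bsfU_i^n)\SCAL\BL_i + O(\tau)$ together with an additional residual $2\snsum d_{ij}\upLn\bigl[E_{\textup{flat}}(\Uistar) - E_{\textup{flat}}(\bsfU_i^n)\bigr]$ appearing on the right.

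The main obstacle will be showing that this residual is non-positive, so that it can simply be dropped. This will rely on the fact that the hydrostatic reconstruction~\eqref{def_of_Ustar} preserves the velocity $\bsfV_i^n$ while only decreasing the water depth, $\Histar\le \sfH_i^n$. Because $E_{\textup{flat}}(\Uistar) = \tfrac12 g(\Histar)^2 + \tfrac12\Histar\|\bsfV_i^n\|_{\ell^2}^2$, the scalar map $h\mapsto \tfrac12 gh^2 + \tfrac12 h\|\bsfV_i^n\|_{\ell^2}^2$ is monotone on $\Real_+$, so $E_{\textup{flat}}(\Uistar)\le E_{\textup{flat}}(\bsfU_i^n)$; together with $d_{ij}\upLn\ge 0$ for $j\ne i$, this forces the residual to be non-positive. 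Replacing $\GRAD_\bu E_{\textup{flat}}(\bsfU_i^n)$ by $\GRAD_\bu E_{\textup{flat}}(\bsfU_i\upLn)$ costs a further $O(\tau)$ (since $\bsfU_i\upLnp - \bsfU_i^n = O(\tau)$), completing~\eqref{eq:entropy_ineq_full}. The flat case~\eqref{eq:entropy_ineq_flat} then follows as an immediate specialisation: when $z\equiv 0$ the hydrostatic reconstruction is trivial, so $\Uistar = \bsfU_i^n$, $\Ujstar = \bsfU_j^n$, and both the residual and the affine shift $\BL_i$ vanish identically, the Taylor step becomes exact, and~\eqref{eq:entropy_ineq_flat} is obtained with a clean zero on the right.
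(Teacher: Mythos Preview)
Your proposal is correct and uses the same three ingredients as the paper: the convex decomposition of Lemma~\ref{Lem:low_order_stability}, Jensen's inequality for $E_{\textup{flat}}$, and the bar-state entropy inequality~\eqref{eq:magic}. The organisation differs slightly. The paper first subtracts the common shift, writing $\bsfU_i\upLnp-\tfrac{\tau}{m_i}\BL_i$ as a convex combination of the \emph{unshifted} states $\bsfU_i^n$ and $\Ubar_{ij}^n$, applies Jensen and~\eqref{eq:magic} there, and then performs a single Taylor expansion of $E_{\textup{flat}}\big(\bsfU_i\upLnp-\tfrac{\tau}{m_i}\BL_i\big)$ at the very end. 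You instead apply Jensen to the shifted states and Taylor expand each term; this works but costs several expansions where the paper needs one.

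One point worth noting: the residual $2\snsum d_{ij}\upLn\bigl[E_{\textup{flat}}(\Uistar)-E_{\textup{flat}}(\bsfU_i^n)\bigr]$ that you isolate and discard via the monotonicity argument $\Histar\le \sfH_i^n\Rightarrow E_{\textup{flat}}(\Uistar)\le E_{\textup{flat}}(\bsfU_i^n)$ is in fact also present (but left implicit) in the paper's derivation of~\eqref{eq:entropy_ineq_intermediate}; your treatment of it is more explicit than the paper's own proof.
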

\begin{proof}
  We first rewrite the convex combination~\eqref{eq1:Lem:low_order_stability}
  as follows:
  \begin{equation*}
    \bsfU_i\upLnp - \frac{\tau}{m_i}\BL_i
    = \Big(1 + \frac{2\tau d_{ii}\upLn}{m_i}\Big)
    \bsfU_i^n + \snsum \frac{2\tau \dij\upLn}{m_i} \Ubar_{ij}^n,
  \end{equation*}
  and then observe that
  \begin{multline}
    \label{eq:magic}
    E_{\textup{flat}}(\Ubar_{ij}^n) \leq \frac12\left(E_{\textup{flat}}(\Uistar)
    + E_{\textup{flat}}(\Ujstar)\right)
    \\
    -\;\frac{1}{2d_{ij}\upLn}\left(\bF_{\textup{flat}}(\Ujstar) -
    \bF_{\textup{flat}}(\Uistar)\right)\cij
  \end{multline}
  holds true for the auxiliary states. Combining these, as well as
  exploiting the convexity of $E_{\textup{flat}}$ gives rise to an entropy inequality:
  \begin{multline}
    \label{eq:entropy_ineq_intermediate}
    \frac{m_i}{\tau}\Big(E_{\textup{flat}}\big(\bsfU_i\upLnp -
    \frac{\tau}{m_i}\BL_i\big) - E_{\textup{flat}}(\bsfU_i\upn)\Big)
    \\
    + \ssum\Big\{\big(\bF_{\textup{flat}}(\Ujstar) -
    \bF_{\textup{flat}}(\Uistar)\big)\cij -
    \\
    d_{ij}\upLn\big(E_{\textup{flat}}(\Ujstar) -
    E_{\textup{flat}}(\Uistar)\big)\Big\}
    \;\leq\;0
  \end{multline}
  As a last ingredient for showing \eqref{eq:entropy_ineq_full} we use the
  following Taylor series expansion:
  \begin{align*}
    E_{\textup{flat}}\big(\bsfU_i\upLnp - \frac{\tau}{m_i}\BL_i\big)
    \;&=\;
    E_{\textup{flat}}\big(\bsfU_i\upLnp) \,-\, \GRAD_\bu
    E_{\textup{flat}}(\bsfU_i\upLnp)\,\SCAL\,\BL_i \,+\,\mathcal{O}(\tau)
    \\
    \;&=\;
    E_{\textup{flat}}\big(\bsfU_i\upLnp) \,-\, \GRAD_\bu
    E_{\textup{flat}}(\bsfU_i\upLn)\,\SCAL\,\BL_i \,+\,\mathcal{O}(\tau).
  \end{align*}
  Finally,
  \eqref{eq:entropy_ineq_intermediate} readily reduces to
  \eqref{eq:entropy_ineq_flat} for the case of constant bathymetry.
\end{proof}

%%%%%%%%%%%%%%%%%%%%%%%%%%%%%%%%%%%%%%%%%%%%%%%%%%%%%%%%%%%%%%%%%%%%%%%%%%%%%%%%
\subsection{High-order spatial approximation}
\label{sec:high_order}
We now present a high-order spatial approximation of the problem. For
$i\in\calV$ and $j\in\calI(i)$, we define the following high-order flux:
\begin{multline}
    \label{eq:high_order_flux}
    \FH_{ij}\eqq{} -\left(\bsfU_j^n\otimes\bsfV_j^n+\bsfU_i^n\otimes
    \bV_i^n\right)\cij + d_{ij}\upHn\left(\Ujstar - \Uistar\right)
    \\
    - \begin{pmatrix}
      0\\g(\sfH_i^n \sfH_j^n + \sfH_i^n(\sfZ_j - \sfZ_i))\bc_{ij}
    \end{pmatrix},
\end{multline}
where $d_{ij}\upHn = d_{ji}\upHn$ is a high-order graph viscosity. The
symmetry of $d_{ij}\upHn$ implies that $\bsfF_{ij}\upH=-\bsfF_{ji}\upH$
holds true when the topography is flat. We set $\bsfF_i\upH\eqq
\ssum\bsfF_{ij}\upH$. The high-order graph viscosity
coefficient $d_{ij}\upHn$ is defined as follows:
\begin{equation*}
  d_{ij}\upHn\eqq d_{ij}\upLn\frac{\alpha_i^n +
  \alpha_j^n}{2}\quad\text{for }i\neq j,\qquad d_{ii}\upHn\eqq\snsum
  d_{ij}\upHn.
\end{equation*}
Here, $\alpha_i^n\in[0,1]$ is an \textit{indicator} for \emph{entropy
production} and is defined as follows for each $i\in\calV$:
\begin{align}
  & \alpha_i^n\eqq\frac{\abs{N_i^n}}{D_i^n +\epsilon D_{\max}},
  \\
  & N_i^n\eqq\ssum\big\{\bF_{\textup{flat}}(\bsfU_j^n)-(\GRAD_{\bu}
  E_{\textup{flat}}(\bsfU_i^n))\tr\DIV\polf(\bsfU_j^n) \big\}\cij,
  \\
  & D_i^n\eqq \abs{\ssum\bF_{\textup{flat}}(\bsfU_j^n)\cij} +
  \abs{\ssum(\GRAD_{\bu}
  E_{\textup{flat}}(\bsfU_i^n))\tr\DIV\polf(\bsfU_j^n)\cij}.
\end{align}
The small number $\epsilon D_{\textup{max}}\eqq\epsilon\times \sqrt{g
\waterh_{\textup{max}}}\tfrac12g\waterh_{\textup{max}}^2$ is meant to avoid
division by zero when either the water depth or velocity is zero; or the
entropy is constant.

As the high-order approximation requires estimating the inverse of the
high-order mass matrix to reduce the dispersion effects, we proceed as in
\citep[\S3.4]{Guermond_Nazarov_Popov_Yang_2014} to approximate
$(\polM\upH)^{-1}$. Using the expression $(\polM\upH)^{-1} = (\polM\upL)^{-1}
\big(\polI - (\polM\upL - \polM\upH) (\polM\upL)^{-1}\big)^{-1}$ and
setting $\polB\eqq (\polM\upL - \polM\upH) (\polM\upL)^{-1}$, we
approximate $(\polM\upH)^{-1}$ by $(\polM\upL)^{-1}(\polI+\polB)$. This
expansion is shown in \citep[Prop.~3.1]{Guermond_Pasq_2013} to be
superconvergent and to remove the dispersion error for the approximation of
the linear transport equation with piecewise linear continuous finite
elements on uniform meshes. Let $\{b_{ij}\}_{j\in\calI(i)}$ for all
$i\in\calV$ be the entries of $\polB$,~\ie $b_{ij} = \delta_{ij} -
\frac{m_{ij}}{m_j}$ and $b_{ji} = \delta_{ij} - \frac{m_{ji}}{m_i}$. Then,
recalling that $\ssum b_{ji}=0$, the provisional high-order approximation
with forward Euler time-stepping is given by:
\begin{equation}\label{eq:high_order}
    \frac{m_{i}}{\tau}(\bsfU_i\upHnp - \bsfU_i^n) =  \ssum
    \FH_{ij}  + b_{ij}\FH_j - b_{ji}\FH_i .
\end{equation}
As the provisional high-order update $\bsfU_i\upHnp$ defined above is not
well-balanced and also not guaranteed to be invariant-domain-preserving. We
present in the next section a convex limiting technique that combines the
low-order update and the provisional high-order update to make the final
update at $t^{n+1}$ high-order accurate, well-balanced, and
invariant-domain preserving.

\begin{remark}[Spatial accuracy]
  The spatial accuracy of the method~\eqref{eq:high_order} delivers close
  to optimal accuracy for smooth solutions for the underlying spatial
  discretization. For example, assuming the discretization is based on
  continuous linear finite elements, the method is formally second-order
  accurate.
\end{remark}

\subsection{Convex limiting procedure}
\label{sec:convex_limiting}
We now detail the convex limiting procedure. The methodology is loosely
based on~\citep{Guermond_2018_SISC,Guermond_Popov_Tovar_Kees_JCP_2019} and
follows the common FCT ideology
(see:~\cite{zalesak1979fully,boris1997flux,kuzmin2012flux}). The novelty of
the approach proposed in the paper resides in the definition of the local
bounds and the incorporation of sources in the convex combination
introduced in Lemma~\ref{Lem:low_order_stability}.

\subsubsection{Local bounds}
For each $i\in\calV$, we let $\{\lambda_j\}_{j\in\calI^{*}(i)}$ be any set
of positive coefficients that sum up to 1 using the index set $\calI^{*}(i)$.
In the numerical illustrations reported at the end of the paper we use
$\lambda_i = \frac{1}{ \text{Card}(\calI^*(i))}$. Subtracting~\eqref{low_order} from~\eqref{eq:high_order} we obtain
\begin{align}
  \notag
  & (\bsfU_i\upHnp - \bsfU_i\upLnp) = \snsum\lambda_i \Pij^n,
  \\
  \intertext{with}
  \label{eq:P_ij}
  & \Pij^n\eqq \frac{\tau }{m_i \lambda_i}\big\{\FH_{ij}
  - \FL_{ij} + b_{ij}\FH_j - b_{ji}\FH_i\big\}.
\end{align}
Notice that the coefficients $\Pij$ are skew-symmetric.
The key principle of the convex limiting strategy is as follows: For all
$i\in\calV$ and all $j\in\calI^{*}(i)$, we look for a set of symmetric limiting
coefficients $\ell_{ij}^n\in[0,1]$ such that the limited update
$\bsfU_i\upLnp + \snsum\ell_{ij}^n \lambda_i \Pij^n$ satisfies
reasonable properties and is well-balanced. After finding this collection
of limiting coefficients, we define the final update to be
\begin{equation}
  \label{eq:final_update}
  \bsfU_i\upnp = \bsfU_i\upLnp + \snsum\ell_{ij}^n \lambda_i\Pij^n.
\end{equation}
Notice that the final update can be equivalently written as:
\begin{equation*}
  \bsfU_i\upnp = \snsum\lambda_i\left(\bsfU_i\upLnp + \ell_{ij}^n
  \Pij^n\right).
\end{equation*}
which is just a convex combination of limited states. We now explain
how we define
the local bounds which are used to define the limiting
coefficients. The following details differ from our previous
work~\citep{Guermond_2018_SISC,Guermond_Popov_Tovar_Kees_JCP_2019}. Taking
inspiration from the convex combination~\eqref{eq1:Lem:low_order_stability}
for all $i\in\calV$ and all ${j\in\calI(i)}$, we set
$\overline{\bsfW}_{ij}^n\eqq \Ubar_{ij}^n + \frac{\dt}{m_i} \bsfB_i\upLn$.
We then define the minimum and maximum local bound on the water depth by
setting:
\begin{equation}
    \sfH_{i,\min}^n = \min_{j\in\calI(i)} \sfH(\overline{\bsfW}_{ij}^n), \qquad
    \sfH_{i,\max}^n = \max_{j\in\calI(i)} \sfH(\overline{\bsfW}_{ij}^n).
\end{equation}
In order to control the potential blow-up of the velocity in dry states, we
introduce the quantity $\sfH(\bsfU)^2
\sfV^{2,n}_{i,\max}-\|\bsfQ(\bsfU)\|_{\ell^2}^2$ where
\begin{equation}
    \sfV^{2,n}_{i,\max} = \max_{j\in\calI(i)}
    \|\bsfV(\overline{\bsfW}_{ij}^n)\|^2_{\ell^2}.
\end{equation}
Here, $\bsfV(\bsfU)$ is the regularized version of the velocity. In~\citep[p.~A3889]{Guermond_2018_SISC},
the authors propose a limiting technique based on the kinetic energy
($\tfrac12\|\bsfQ(\bsfU)\|_{\ell^2}^2/\sfH(\bsfU)$), but we found that the approach we 
propose here is more robust with respect to dry states.
A control on the velocity via limiting is also adopted
in~\cite[Sec.~3.2]{hajduk2022bound}. 
The following result is essential to
establish the validity of the limiting process.
\begin{lemma}\label{Lem:local_bounds}
  Assume that $\bsfU_i^n\in\calA$ for all $i\in\calV$. Assume also that the
  time step satisfies the restriction $\dt \le
  \min_{i\in\calV}\frac{m_i}{2|d_{ii}\upLn|}$. Then,
  \begin{align}
    \label{H_bound:Lem:local_bounds}
    &\sfH_{i,\min}^n  \le \sfH(\bsfU_i\upLnp) \quad \text{and}\quad 
     \sfH(\bsfU_i\upLnp)\le \sfH_{i,\max}^n,
    \\
    \label{K_bound:Lem:local_bounds}
    &\|\bsfQ(\bsfU_i\upLnp)\|^2_{\ell^2}
     \le \sfH(\bsfU_i\upLnp) \sfV_{i,\max}^{2,n}
     \qquad \textup{if } \epsilon\sfh_{\max}\le \sfH_{i,\min}^n.
  \end{align}
\end{lemma}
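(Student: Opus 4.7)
The plan is to leverage the convex combination decomposition established in Lemma~\ref{Lem:low_order_stability} directly. Setting $\mu_{ii}\eqq 1 + \frac{2\tau d_{ii}\upLn}{m_i}$ and $\mu_{ij}\eqq \frac{2\tau d_{ij}\upLn}{m_i}$ for $j\in\calI^*(i)$, the CFL restriction $\tau \le \min_i m_i/(2|d_{ii}\upLn|)$ ensures $\mu_{ij}\ge 0$, and the relation $\sum_{j\in\calI(i)} d_{ij}\upLn=0$ gives $\sum_{j\in\calI(i)} \mu_{ij}=1$. Recalling the convention $\overline{\bsfW}_{ii}^n = \bsfU_i^n + \frac{\tau}{m_i}\BL_i$ and $\overline{\bsfW}_{ij}^n = \Ubar_{ij}^n + \frac{\tau}{m_i}\BL_i$, identity~\eqref{eq1:Lem:low_order_stability} reads $\bsfU_i\upLnp = \sum_{j\in\calI(i)} \mu_{ij}\,\overline{\bsfW}_{ij}^n$.

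Since $\sfH$ is a linear mapping, applying it to this convex combination and using that the weights are nonnegative and sum to one immediately yields $\min_{j\in\calI(i)}\sfH(\overline{\bsfW}_{ij}^n) \le \sfH(\bsfU_i\upLnp) \le \max_{j\in\calI(i)}\sfH(\overline{\bsfW}_{ij}^n)$, which by definition is~\eqref{H_bound:Lem:local_bounds}.

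For~\eqref{K_bound:Lem:local_bounds}, I would first use the hypothesis $\epsilon\sfh_{\max} \le \sfH_{i,\min}^n$ to conclude that $\sfH(\overline{\bsfW}_{ij}^n)\ge \epsilon\sfh_{\max}$ for every $j\in\calI(i)$, so the regularization in the definition of $\bsfV$ is inactive on every auxiliary state, giving $\bsfQ(\overline{\bsfW}_{ij}^n) = \sfH(\overline{\bsfW}_{ij}^n)\,\bsfV(\overline{\bsfW}_{ij}^n)$. Then by linearity of $\bsfQ$,
\begin{equation*}
  \bsfQ(\bsfU_i\upLnp) \;=\; \sum_{j\in\calI(i)} \mu_{ij}\,\sfH(\overline{\bsfW}_{ij}^n)\,\bsfV(\overline{\bsfW}_{ij}^n).
\end{equation*}
Introducing the weights $\gamma_{ij}\eqq \mu_{ij}\sfH(\overline{\bsfW}_{ij}^n)/\sfH(\bsfU_i\upLnp)$, which are nonnegative and sum to one (using that $\sum_j \mu_{ij}\sfH(\overline{\bsfW}_{ij}^n)=\sfH(\bsfU_i\upLnp)$), we rewrite $\bsfQ(\bsfU_i\upLnp)/\sfH(\bsfU_i\upLnp) = \sum_j \gamma_{ij}\,\bsfV(\overline{\bsfW}_{ij}^n)$. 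Applying Jensen's inequality to the convex map $\bx\mapsto \|\bx\|_{\ell^2}^2$ gives $\|\bsfQ(\bsfU_i\upLnp)\|_{\ell^2}^2/\sfH(\bsfU_i\upLnp)^2 \le \sum_j \gamma_{ij}\|\bsfV(\overline{\bsfW}_{ij}^n)\|_{\ell^2}^2 \le \sfV_{i,\max}^{2,n}$, which upon multiplying by $\sfH(\bsfU_i\upLnp)^2$ yields the stated bound.

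The mild conceptual hurdle is recognizing that the natural linearization is not through $\bsfU_i^n$ or $\Uistar$ alone, but through the shifted auxiliary states $\overline{\bsfW}_{ij}^n$; the low-order update from Lemma~\ref{Lem:low_order_stability} is a convex combination only after the affine translation by $\frac{\tau}{m_i}\BL_i$ has been absorbed into the bar states. Once this is in place, the water-depth bound is an immediate consequence of linearity, and the quadratic control on the momentum reduces to Jensen's inequality after re-normalizing the weights by the (strictly positive) water depth, which is where the hypothesis $\epsilon\sfh_{\max}\le \sfH_{i,\min}^n$ is essential both to bypass the velocity regularization and to guarantee the denominators $\sfH(\overline{\bsfW}_{ij}^n)$ are safely nonzero.
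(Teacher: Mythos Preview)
Your argument is correct; the water-depth bound is handled exactly as in the paper (convexity of the combination from Lemma~\ref{Lem:low_order_stability} plus linearity of $\sfH$). For the velocity bound, however, you take a somewhat different and more elementary route than the paper: the authors invoke two external results from~\citep{guermond2019invariant}, first observing that $\bsfU\mapsto\|\bsfQ(\bsfU)\|_{\ell^2}^2/\sfH(\bsfU)$ is convex and then that the ratio $\|\bsfQ(\bsfU)\|_{\ell^2}^2/\sfH(\bsfU)^2$ is quasi-convex (convex over linear), whence a quasi-convexity lemma yields $\|\bsfV(\bsfU_i\upLnp)\|_{\ell^2}^2\le\sfV_{i,\max}^{2,n}$ directly. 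Your re-weighting by $\gamma_{ij}=\mu_{ij}\sfH(\overline{\bsfW}_{ij}^n)/\sfH(\bsfU_i\upLnp)$ followed by Jensen's inequality on $\|\cdot\|_{\ell^2}^2$ is in effect a self-contained proof of that same quasi-convex bound, and has the advantage of avoiding the external machinery; the paper's route, in turn, makes clearer that the mechanism at work is the quasi-convexity of the squared-velocity functional, which may generalize more readily. Both arrive at $\|\bsfQ(\bsfU_i\upLnp)\|_{\ell^2}^2\le\sfH(\bsfU_i\upLnp)^2\,\sfV_{i,\max}^{2,n}$ (the single power of $\sfH$ in~\eqref{K_bound:Lem:local_bounds} is a typo; the paper's own proof and the later definition of $\Psi_3$ confirm it should be $\sfH^2$).
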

\begin{proof}
  The bound~\eqref{H_bound:Lem:local_bounds} is a direct consequence of the
  combination~\eqref{eq1:Lem:low_order_stability} being convex and the mapping
  $\bsfU\to \sfH(\bsfU)$ being linear. We now prove the second bound. We
  first observe that the mapping $\Real_{>0}\CROSS\Real^{d}\ni \bsfU \to
  \|\bsfQ(\bsfU)\|_{\ell^2}^2/\sfH(\bsfU)\in \Real$ is convex. Then, since
  $\bsfU\to \sfH(\bsfU)$ is linear, the mapping
  $\Real_{>0}\CROSS\Real^{d}\ni \bsfU \to
  \|\bsfQ(\bsfU)\|_{\ell^2}^2/\sfH^2(\bsfU)\in \Real$ is quasi-convex due
  to~\citep[Lem.~7.4]{guermond2019invariant}. An application
  of~\citep[Lem.~7.2]{guermond2019invariant} yields:
  $\|\bsfV(\bsfU_i\upLnp)\|^2_{\ell^2}\leq  \sfV^{2,n}_{i,\max}$. As we
  assumed that $\sfH_{i,\min}^n \ge \epsilon\sfh_{\max}$, we infer that
  $\sfH(\overline{\bsfW}_{ij}^n)\ge \epsilon\sfh_{\max}>0$ and
  $\sfH(\bsfU_i\upLn)\ge \epsilon\sfh_{\max}>0$. This implies in particular
  that $\bsfQ(\overline{\bsfW}_{ij}^n) = \sfH(\overline{\bsfW}_{ij}^n)
  \bsfV(\overline{\bsfW}_{ij}^n)$ and $\bsfQ(\bsfU\upLn) =
  \sfH(\bsfU_i\upLn) \bsfV(\bsfU\upLn)$. Hence,
  \begin{equation*}
    \|\bsfV(\bsfU_i\upLnp)\|^2_{\ell^2}\leq  \sfV^{2,n}_{i,\max} \implies
    \|\bsfQ(\bsfU_i\upLnp)\|_{\ell^2}^2 \leq \sfH(\bsfU_i\upLnp)^2
    \sfV^{2,n}_{i,\max}.
  \end{equation*}
  This completes the proof.
\end{proof}

\begin{remark}[Bounds relaxation]
  To achieve optimal accuracy in $L^p$-norms, $p\ge 1$, for smooth solutions, the
  bounds defined above must be relaxed. For the sake of brevity, we refer
  the reader to~\citep[Sec.~7.6]{guermond2019invariant} where this is
  discussed in detail.
\end{remark}

\begin{remark}[Source terms]
  In order to incorporate the external source term $\bS(\bu)$ given
  by~\eqref{eq:source_terms} into the high-order scheme and the subsequent
  convex limiting procedure we change the definition of $\Pij$ as follows:
  \begin{multline*}
    \Pij^n\eqq \frac{\tau }{m_i \lambda_i}\Big\{\FH_{ij} - \FL_{ij} +
    b_{ij}\FH_j - b_{ji}\FH_i
    \\
    +m_{ij}\bS_j^n-m_{ij}\bS_i^n
    +b_{ij}\big(\sum_{k\in\calI(j)}m_{jk}\bS_{k}^n\big)
    -b_{ji}\big(\sum_{k\in\calI(i)}m_{ik}\bS_{k}^n\big)
   \Big\},
  \end{multline*}
  where $\bS_i^n$ is again given by \eqref{eq:source_terms_discrete}.
\end{remark}

\subsubsection{Optimal limiting coefficient}
We now detail the process for finding near optimal limiting coefficients
$l_{ij}$. We introduce the functionals:
$\Psi_1(\bsfU)\eqq\sfH(\bsfU)-\sfH_i^{n,\min},
\,\Psi_2(\bsfU)\eqq\sfH_i^{n,\max}-\sfH(\bsfU),\,
\Psi_3(\bsfU)\eqq\sfH(\bsfU)^2\sfV^{2,n}_{i,\max} -
\|\bsfQ(\bsfU)\|^2_{\ell^2}$. The strategy is as follows: for each
$k\in\{1,2,3\}$, we find $\ell\in[0,1]$ such that
$\Psi_k(\bsfU_i\upLnp+\ell \bsfP_{ij}^n)\geq0$ in a sequential manner.

We first limit the water depth. To ensure robustness with respect to dry
states, we introduce for $i$ in $\calV$:
\begin{equation}\label{eq:lij_h}
  \ell_j^{i,\waterh} =
  \begin{cases}
    \min\Big(\frac{\abs{\sfH_{i,\min}^n -
    \sfH(\bsfU_i\upLnp)}}{\abs{\sfP_{ij}^\waterh} + \epsilon
    \sfH_{i,\max}^n},1\Big), & \text{if }\sfH(\bsfU_i\upLnp) +
    \sfP_{ij}^\waterh < \sfH_{i,\min}^n,
    \\
    1,                                  & \sfH_{i,\min}^n \leq
    \sfH(\bsfU_i\upLnp) + \sfP_{ij}^\waterh  \leq \sfH_{i,\max}^n, \\
    \min\Big(\frac{\abs{\sfH_{i,\max}^n -
    \sfH(\bsfU_i\upLnp)}}{\abs{\sfP_{ij}^\waterh} +\epsilon
    \sfH_{i,\max}^n},1\Big),  & \text{if }\sfH_{i,\max}^n<\sfH(\bsfU_i\upLnp)
    + \sfP_{ij}^\waterh.
  \end{cases}
\end{equation}
This process guarantees that $\Psi_{1}(\bsfU_i\upLnp + \ell\Pij)\geq0$ and
$\Psi_{2}(\bsfU_i\upLnp + \ell\Pij)\geq0$ for all
$\ell\in[0,\ell_j^{i,\waterh}]$.  This enforces a local minimum principle
and a local maximum principle on the water depth. As a corollary this also
enforces positivity of the water depth $\sfH^{n+1}_i$.

After limiting the water depth, we limit the velocity based on the
bound~\eqref{K_bound:Lem:local_bounds}. Notice that the functional
$\Psi_3(\bsfU_i\upLnp + \ell \bsfP^n_{ij})$ is quadratic in $\ell$:
\[\Psi_3(\bsfU_i\upLnp + \ell \bsfP^n_{ij}) = (\sfH_i\upLnp + \ell
  \sfP^h_{ij})^2\sfV_{i,\max}^{2,n} - \|\bsfQ_i\upLnp + \ell
  \bsfP^{\bq}_{ij}\|^2_{\ell^2}.\] Thus, one can find the root
  $\ell_j^{i,\bv}\in[0,\ell_j^{i,\waterh}]$ of
  $\Psi_3(\bsfU_i\upLnp + \ell_j^{i,\bv} \bsfP^n_{ij}) = 0$ by either
  solving the quadratic equation as in
  \citep[Eq.~(6.33)-(6.34)]{Guermond_2018_SISC} or simply employing a
  quadratic Newton algorithm. We refer the reader
to~\citep[Alg.~3]{ryujin-2021-1} for a description of the quadratic
newton algorithm implemented in the code used for the numerical
illustrations. This process guarantees that
$\Psi_3(\bsfU_i\upLnp + \ell \bsfP_{ij}^n)\geq0$ for all
$\ell\in[0,\ell_j^{i,\bv}]$. This enforces a local maximum principle
on the quantity $\|\bv\|^2_{\ell^2}$. As a corollary, this also
enforces that the final solution will be well-balanced with respect to
rest states.

Finally, we set the optimal limiting coefficient to
\begin{equation}\label{eq:final_limiter}
  \ell_{ij}^n\eqq\min(\ell_j^{i,\bv}, \ell_i^{j,\bv}), \qquad \text{for all
  }i\in\calV\text{ and }j\in\calI(i),
\end{equation}
to ensure conservation of the method.

\subsubsection{Conservation, invariant-domain preservation and well balancing}
We now formalize results for the convex limiting procedure concerning
conservation, invariant-domain preservation and well balancing.
\begin{proposition}[Conservation]\label{prop:conservation}
The update given by~\eqref{eq:final_update} is mass conservative up to the contribution of external sources.  
\end{proposition}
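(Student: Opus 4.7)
The plan is to reduce the conservation claim for the
update~\eqref{eq:final_update} to the conservation of the low-order
update, which is already granted by Lemma~\ref{lem:low-order-WB} and its
source-aware extension~\eqref{eq:source_terms_discrete}. Multiplying
\eqref{eq:final_update} by $m_i$ and summing over $i\in\calV$ gives
\[
\sum_{i\in\calV} m_i\, \bsfU_i\upnp
\;=\; \sum_{i\in\calV} m_i\, \bsfU_i\upLnp
\;+\; \sum_{i\in\calV}\snsum m_i\, \ell_{ij}^n\, \lambda_i\, \Pij^n.
\]
The first sum on the right is mass-conservative up to external sources by
the low-order result, so the remaining task is to show that the limiter
correction, i.e.\ the double sum, vanishes.

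The key ingredient is the symmetry of the limiting coefficients
$\ell_{ij}^n = \ell_{ji}^n$, built into~\eqref{eq:final_limiter} via the
$\min$. Regrouping the double sum by unordered pairs $\{i,j\}$ and inserting
the definition~\eqref{eq:P_ij}, the two cross-diffusion terms cancel
identically,
$b_{ij}\FH_j - b_{ji}\FH_i + b_{ji}\FH_i - b_{ij}\FH_j = 0$,
so that the contribution of the pair $\{i,j\}$ reduces to
$\tau\bigl[(\FH_{ij}+\FH_{ji}) - (\FL_{ij}+\FL_{ji})\bigr]$. From here,
$\bc_{ij}=-\bc_{ji}$ together with $d_{ij}\upLn=d_{ji}\upLn$ and
$d_{ij}\upHn=d_{ji}\upHn$ imply that the water-depth component of both
$\FH_{ij}+\FH_{ji}$ and $\FL_{ij}+\FL_{ji}$ vanishes term by term, which
gives exact conservation of mass.

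For the momentum component, the convective and viscous parts are likewise
skew-symmetric, while the topography-pressure contributions do not cancel
pairwise. I would then check that the non-skew-symmetric residual in
$\FH_{ij}+\FH_{ji} - (\FL_{ij}+\FL_{ji})$ coincides with the topography
source already carried by the low-order scheme, so that it is absorbed
into the ``up to external sources'' clause. A parallel bookkeeping with the
source-augmented $\Pij^n$ from the last remark of
Section~\ref{sec:convex_limiting} shows that the friction and rain
contributions aggregate to exactly $\tau\sum_{i\in\calV} m_i \bS_i^n$.

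The hard part will be the careful accounting of the topography-pressure
terms in the momentum equation: they are not strictly skew-symmetric under
$i \leftrightarrow j$, and one must verify that the residual left after
pairwise summation matches exactly the discrete source already supplied by
the low-order scheme, so that the limiter introduces no additional
non-conservative contribution. A minor caveat is the handling of boundary
degrees of freedom where $\bc_{ij}\neq -\bc_{ji}$; this is consistent with
the boundary treatment of Appendix~\ref{sec:boundary_conditions} and only
affects the natural boundary inflow/outflow, not the interior conservative
structure.
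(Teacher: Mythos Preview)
Your approach is correct and shares the same core mechanism as the paper's proof: the symmetry $\ell_{ij}^n=\ell_{ji}^n$ together with the skew-symmetry of $m_i\lambda_i\Pij^n$ makes the limiter correction telescope to zero. The paper, however, dispatches the argument in two lines by simply \emph{assuming the bathymetry is flat and external sources are absent}; under that hypothesis the paper has already noted (right after~\eqref{eq:P_ij}) that $\Pij$ is skew-symmetric for the full state, so $\sum_{i\in\calV}m_i\bsfU_i\upnp=\sum_{i\in\calV}m_i\bsfU_i\upn$ follows immediately. The topography forcing is subsumed under the ``up to external sources'' clause, so no component-wise analysis is needed.

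Your route---separating the water-depth and momentum components and showing that the first is skew-symmetric for \emph{arbitrary} bathymetry---actually yields a slightly sharper statement (mass is conserved even with nonflat $z$), at the cost of more bookkeeping. The part where you plan to ``check that the non-skew-symmetric residual \ldots\ matches exactly the discrete source already supplied by the low-order scheme'' is unnecessary: ``mass conservative'' here refers only to the water depth, and the paper makes no claim about momentum conservation, so that entire discussion (and the attendant worry about topography-pressure terms) can be dropped. Your caveat about boundary nodes where $\bc_{ij}\neq-\bc_{ji}$ is legitimate but is not addressed in the paper's proof either.
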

\begin{proof}
  Assume the bathymetry is flat and there is no contribution of external
  source terms. Then, given the facts that the limiter $\ell_{ij}$ is
  symmetric by definition and the quantity $\Pij$ is skew-symmetric yields
  $\sum_{i\in\calV}m_i\bsfU_i\upnp = \sum_{i\in\calV}m_i\bsfU_i\upn$.
\end{proof}
\begin{proposition}[Invariant-domain preserving]\label{prop:IDP}
  Let  $n\geq0$. Assume that the $\bsfU_i\upn\in\calA$ for all $i\in\calV$.
  Then the update $\bsfU_i\upnp$ given by~\eqref{eq:final_update} with the
  limiting coefficient~\eqref{eq:final_limiter} is invariant-domain
  preserving under the time-step restriction $\dt \le
  \min_{i\in\calV}\frac{m_i}{2|d_{ii}\upLn|}$.
\end{proposition}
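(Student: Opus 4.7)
The plan is to argue that the invariant-domain property is preserved \emph{locally} by writing the final update~\eqref{eq:final_update} as a convex combination of limited bar-like states, each of which individually lies in $\calA$. The starting point is the algebraic identity pointed out just after~\eqref{eq:final_update}:
\begin{equation*}
  \bsfU_i^{n+1} \;=\; \sum_{j\in\calI^*(i)} \lambda_i\,\bigl(\bsfU_i^{L,n+1} + \ell_{ij}^n\,\bsfP_{ij}^n\bigr),
\end{equation*}
which is genuinely convex because the weights $\{\lambda_i\}_{j\in\calI^*(i)}$ are positive and sum to one. Since $\calA$ is convex, it suffices to show that each limited state $\bsfU_i^{L,n+1} + \ell_{ij}^n\,\bsfP_{ij}^n$ has positive water depth.

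For this, I would chain the inequalities produced by the limiter construction. First, by definition~\eqref{eq:final_limiter}, $\ell_{ij}^n = \min(\ell_j^{i,\bv},\ell_i^{j,\bv})\le \ell_j^{i,\bv}$. Next, $\ell_j^{i,\bv}$ is chosen by design as a root of $\Psi_3$ inside the interval $[0,\ell_j^{i,\waterh}]$, so $\ell_{ij}^n\le\ell_j^{i,\waterh}$. Finally, the formula~\eqref{eq:lij_h} for $\ell_j^{i,\waterh}$ is precisely set up so that for every $\ell\in[0,\ell_j^{i,\waterh}]$ one has
\begin{equation*}
  \sfH_{i,\min}^n \;\le\; \sfH(\bsfU_i^{L,n+1} + \ell\,\bsfP_{ij}^n) \;\le\; \sfH_{i,\max}^n.
\end{equation*}

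To close the argument I would invoke Lemma~\ref{Lem:low_order_stability}: the CFL restriction $\tau\le\min_i m_i/(2|d_{ii}^{L,n}|)$ makes~\eqref{eq1:Lem:low_order_stability} a genuine convex combination, and each auxiliary state $\overline{\bsfW}_{ij}^n = \Ubar_{ij}^n + \tfrac{\tau}{m_i}\bsfB_i^{L,n}$ has nonnegative water depth (with strict positivity whenever the input states do, by the same convex-combination argument used in part~\ref{Item2:Lem:low_order_stability} of that lemma). Hence $\sfH_{i,\min}^n>0$ and therefore $\sfH(\bsfU_i^{L,n+1}+\ell_{ij}^n\bsfP_{ij}^n)\ge\sfH_{i,\min}^n>0$, placing each limited state in $\calA$ and concluding the proof by convexity of $\calA$.

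The main (minor) obstacle is making precise why $\sfH_{i,\min}^n$ inherits positivity from the input states $\bsfU_i^n\in\calA$; this requires re-examining the definition of $\overline{\bsfW}_{ij}^n$ and noting that the shift $\tfrac{\tau}{m_i}\bsfB_i^{L,n}$ has nonnegative water-depth component (as shown inside the proof of Lemma~\ref{Lem:low_order_stability}), so that $\sfH(\overline{\bsfW}_{ij}^n)\ge \sfH(\Ubar_{ij}^n)\ge 0$, with strict positivity obtained via the convexity identity~\eqref{eq1:Lem:low_order_stability} applied component-wise to the water-depth variable. Everything else is bookkeeping that follows mechanically from the definitions.
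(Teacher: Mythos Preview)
Your argument is correct and follows essentially the same route as the paper's proof: write $\bsfU_i^{n+1}$ as the convex combination $\sum_{j\in\calI^*(i)}\lambda_i(\bsfU_i^{L,n+1}+\ell_{ij}^n\bsfP_{ij}^n)$, use the construction of $\ell_j^{i,\waterh}$ to bound each summand's water depth below by $\sfH_{i,\min}^n$, and invoke $\sfH_{i,\min}^n>0$ under the CFL condition. The paper's version is terser and simply asserts $\sfH_{i,\min}^n>0$; your added discussion of why the shift $\tfrac{\tau}{m_i}\bsfB_i^{L,n}$ has nonnegative water-depth component is a welcome clarification, though note that strict positivity of $\sfH_{i,\min}^n$ can fail at genuinely dry neighbors (both proofs gloss over this edge case).
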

\begin{proof}
  Suppose that the time-step restriction $\dt \le
  \min_{i\in\calV}\frac{m_i}{2|d_{ii}\upLn|}$ holds. Then, the 
  combination in Lemma~\ref{Lem:low_order_stability} is convex and the
  local bounds in Lemma~\ref{Lem:local_bounds} hold true. Then, by
  construction of the limiter~\eqref{eq:lij_h}, we have that:
  \begin{align*}
    \sfH\big(\bsfU_i\upnp\big) = \sfH\Big(\ssum\lambda_i\big(\bsfU_i\upLnp +
    \ell_{ij}^n \Pij^n\big)\Big)\geq\sfH_{i,\min}^n>0.
  \end{align*}
  Thus, $\bsfU_i\upLnp\in\calA$ for all $i\in\calV$.
\end{proof}
\begin{proposition}[Well balancing]\label{prop:well-balancing}
  Let $n\geq0$ and assume that the given state
  $\{\bsfU\upn_i\}_{i\in\calV}\subset\calA$ is at rest as
  formalized in Def.~\ref{Def:Rest_at_large}. Then, the update
  $\bsfU_i\upnp$ given by~\eqref{eq:final_update} with the limiting
  coefficient~\eqref{eq:final_limiter} is at rest under the
  time-step restriction
  $\dt \le \min_{i\in\calV}\frac{m_i}{2|d_{ii}\upLn|}$.  (This
    means that the scheme is well-balanced with respect to rest
    states.)
\end{proposition}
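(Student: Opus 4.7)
The overall strategy is to show that, starting from a rest state, the limited update reduces exactly to the low-order update, which is already at rest by Lemma~\ref{lem:low-order-WB}. Concretely, I will verify (i) that the local velocity bound $\sfV^{2,n}_{i,\max}$ vanishes identically, (ii) that the depth components of all flux differences defining $\bsfP_{ij}^n$ vanish, and (iii) that these two facts together with the velocity part of the limiter~\eqref{eq:final_limiter} force $\bsfU_i^{n+1}=\bsfU_i^{L,n+1}$.

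\textbf{Step 1 (low-order is at rest).} By hypothesis $\bsfQ_i^n=\bzero$ and $\sfH_i^{*,j}=\sfH_j^{*,i}$ for all $j\in\calI(i)$. Lemma~\ref{lem:low-order-WB} then yields that $\bsfU_i^{L,n+1}$ is again at rest, in particular $\bsfQ_i^{L,n+1}=\bzero$. It remains only to show that the limiter does not spoil this.

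\textbf{Step 2 (vanishing of $\sfV^{2,n}_{i,\max}$).} A direct computation from~\eqref{eq:auxiliary_states} using $\bsfV_k^n=\bzero$ and $\sfH_k^{*,\ell}=\sfH_\ell^{*,k}$ gives $\sfH(\Ubar_{ij}^n)=\sfH_i^{*,j}$ and $\bsfQ(\Ubar_{ij}^n)=\bzero$, hence $\bsfV(\Ubar_{ij}^n)=\bzero$. From the definition~\eqref{eq:affine} and $\bsfV_i^n=\bzero$, the discharge component of $\bsfB_i^{L,n}$ is also zero. Therefore $\bsfV(\overline{\bsfW}_{ij}^n)=\bzero$ for every $j\in\calI(i)$, which gives $\sfV^{2,n}_{i,\max}=0$.

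\textbf{Step 3 (the depth part of $\bsfP_{ij}^n$ vanishes).} Inspecting the depth components of the two fluxes~\eqref{eq:low_order_flux} and~\eqref{eq:high_order_flux} at rest, every term is either proportional to a velocity (zero at rest) or to $\sfH_j^{*,i}-\sfH_i^{*,j}=0$; the external topography term in~\eqref{eq:high_order_flux} has zero depth component by construction. Hence $\sfH(\FH_{ij})=\sfH(\FL_{ij})=0$ for every $j$, and consequently $\sfH(\FH_i)=\sfH(\FH_j)=0$, so the full expression for $\bsfP_{ij}^n$ in~\eqref{eq:P_ij} gives $\sfH(\bsfP_{ij}^n)=0$.

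\textbf{Step 4 (velocity limiter kills any residual momentum).} With $\bsfQ_i^{L,n+1}=\bzero$ and $\sfV^{2,n}_{i,\max}=0$, the functional $\Psi_3$ evaluated along the limiting ray becomes
\begin{equation*}
\Psi_3\bigl(\bsfU_i^{L,n+1}+\ell\,\bsfP_{ij}^n\bigr)
= \bigl(\sfH_i^{L,n+1}+\ell\,\sfH(\bsfP_{ij}^n)\bigr)^{2}\cdot 0
- \ell^{2}\,\|\bsfQ(\bsfP_{ij}^n)\|_{\ell^2}^{2}
= -\ell^{2}\,\|\bsfQ(\bsfP_{ij}^n)\|_{\ell^2}^{2}.
\end{equation*}
Imposing $\Psi_3\ge 0$ forces $\ell_j^{i,\bv}=0$ whenever $\bsfQ(\bsfP_{ij}^n)\neq\bzero$. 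By the skew-symmetry $\bsfP_{ji}^n=-\bsfP_{ij}^n$ the same conclusion holds for $\ell_i^{j,\bv}$, so $\ell_{ij}^n=\min(\ell_j^{i,\bv},\ell_i^{j,\bv})=0$ whenever $\bsfQ(\bsfP_{ij}^n)\neq\bzero$.

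\textbf{Step 5 (conclusion).} Combining Steps 3 and 4, each term $\ell_{ij}^n\lambda_i\bsfP_{ij}^n$ in~\eqref{eq:final_update} has zero depth component (Step~3) and zero momentum component (either $\bsfQ(\bsfP_{ij}^n)=\bzero$, or $\ell_{ij}^n=0$). Hence $\bsfU_i^{n+1}=\bsfU_i^{L,n+1}$, which is at rest by Step~1.

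\textbf{Main obstacle.} The only delicate point is the dry/partially-dry case addressed by the remark following Def.~\ref{Def:Rest_at_large}: there the momentum component of the high-order flux~\eqref{eq:high_order_flux} need \emph{not} vanish at rest, because $\sfH_i^n+\sfZ_i=\sfH_j^n+\sfZ_j$ can fail even though $\sfH_i^{*,j}=\sfH_j^{*,i}=0$. Thus one cannot expect $\bsfP_{ij}^n=\bzero$ in all situations. The resolution, and the reason for introducing the velocity bound~\eqref{K_bound:Lem:local_bounds} rather than a kinetic-energy bound, is precisely Step~4: the bound $\sfV^{2,n}_{i,\max}=0$ is sharp enough to force $\ell_{ij}^n=0$ on exactly the pairs where the momentum correction is nontrivial, without any structural assumption on the topography near the shoreline.
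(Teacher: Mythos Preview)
Your proof is correct and follows essentially the same approach as the paper: use Lemma~\ref{lem:low-order-WB} for the low-order update, observe that $\sfV^{2,n}_{i,\max}=0$, and conclude that the $\Psi_3$-limiter forces $\ell_{ij}^n\bsfQ(\bsfP_{ij}^n)=\bzero$ so that the limited update coincides with the low-order one. You are in fact slightly more thorough than the paper in two places: in Step~2 you explicitly verify that $\bsfV(\overline{\bsfW}_{ij}^n)=\bzero$ (the paper simply asserts $\sfV^{2,n}_{i,\max}=0$ from $\bsfV_i^n=\bzero$, which tacitly requires this computation), and in Step~3 you check $\sfH(\bsfP_{ij}^n)=0$, which is needed to conclude $\bsfU_i^{n+1}=\bsfU_i^{L,n+1}$ on the pairs where $\bsfQ(\bsfP_{ij}^n)=\bzero$ and $\ell_{ij}^n$ is not forced to zero.
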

\begin{proof}
  Assume that at $t^n$ the discrete state, $\{\bsfU_i\upn\}_{i\in\calV}$,
  is at rest in the sense of Def.~\ref{Def:Rest_at_large}.
  By Lemma~\ref{lem:low-order-WB}, the low-order update $\bsfU\upLnp$ is
  % well-balanced
  at rest. By assumption, we have that $\bsfV_i\upn=\bm{0}$ for
  all $i\in\calV$ and so $\sfV_{i,\max}^{2,n} = 0$. Then, the limiting
  strategy for $\Psi_3(\bsfU_i\upLnp + \ell \bsfP_{ij}^n) = 0$ reduces
  to finding $\ell$ such that
  $\|\bsfQ_i\upLnp + \ell \bsfP^{\bq}_{ij}\|^2_{\ell^2} = 0$.
  %Simple algebraic manipulation shows that this only holds when
  %$\ell^2\|\bsfP^{\bq}_{ij}\|^2_{\ell^2} = 0 \implies \ell = 0$.
   As $\bsfQ_i\upLnp=\bzero$, we infer that
    $\ell^2\|\bsfP^{\bq}_{ij}\|^2_{\ell^2} = 0$.  If
    $\|\bsfP^{\bq}_{ij}\|^2_{\ell^2} = 0$, every value $\ell\in [0,1]$
    gives $\|\bsfQ_i\upLnp + \ell \bsfP^{\bq}_{ij}\|^2_{\ell^2} = 0$,
    otherwise one must have $\ell=0$ and the same conclusion holds.
  Thus, the final update~\eqref{eq:final_update} reduces to the
  low-order solution $\bsfU\upLnp$ which is well-balanced.
\end{proof}

%%% Local Variables:
%%% mode: latex
%%% mode: flyspell
%%% TeX-master: t
%%% End: 

\section{High-order IDP time-stepping: algorithmic and implementation details}
\label{sec:erk_IDP}

In order to achieve a higher order approximation in time the simple
convex-limited Euler update \eqref{eq:final_update} is now used as a
building block for a higher order explicit Runge Kutta scheme. To
ensure robustness of the method it is crucial that the high-order
Runge-Kutta update is also invariant domain preserving. A widely used
family of Runge-Kutta schemes achieving this are \emph{strong stability
preserving} (SSP) explicit Runge-Kutta (ERK) methods introduced by
\cite{Shu_Osher1988}; see also \citep{Gottlieb_Shu_Tadmor_2001,
Shu_Xing_2014}. Here,
we choose a slightly different approach by using a family of
\emph{invariant-domain preserving} (IDP) explicit Runge-Kutta
methods~\citep{Ern_Guermond_2022} that have the distinct advantage of
having a milder time step size restriction than SSP-ERK methods. We refer
the reader to \citep{Ern_Guermond_2022} for a detailed discussion about
derivation and design of IDP-ERK methods. An idea of ensuring stability
through limiting of ERK stages has also been proposed
in~\cite[Sec.~3.3]{kuzmin2022bound}.

For the sake of completeness, we first recall the general setup and formulas
for ERK methods. Let $s\geq1$ be the number of stages. Let
$\frac{\dif}{\dif t}\bu = \calL(t,\bu)$ denote a generic ordinary
differential equation. Then, the s-stage ERK method for solving the ODE is
given by: $\bu^{n, l}\eqq\bu^n + \dt_{\text{ERK}}
\sum_{j\in\{1:l-1\}}a_{l,j}\,\calL(t^n + c_j\dt_{\text{ERK}}, \bu^{n,j})$
for all $l\in[1:s]$ and $\bu^{n+1}\eqq\bu^n + \dt_{\text{ERK}}
\sum_{j\in\{1:s\}}b_{j}\,\calL(t^n + c_j\dt_{\text{ERK}}, \bu^{n,j})$. The
coefficients of the method are typically recorded in a Butcher tableau:
\begin{center}
  \begin{tabular}{ c|c c c c c}
      $c_1$    & 0         &                                            \\
      $c_2$    & $a_{2,1}$ & 0                                          \\
      $c_3$    & $a_{3,1}$ & $a_{3,2}$ & 0                              \\
      $\vdots$ & $\vdots$  &           & $\ddots$ & $\ddots$            \\
      $c_s$    & $a_{s,1}$ & $a_{s,2}$ & $\cdots$ & $a_{s,s-1}$ & 0     \\
      \hline
               & $b_1$     & $b_2$     & $\cdots$ & $b_{s-1}$   & $b_s$
  \end{tabular}
\end{center}
These coefficients satisfy various consistency criteria which we omit here
for brevity. Recall that the coefficients $c_j$ define the intermediate
time steps $t^{n,j}\eqq t^n + c_j \dt_{\text{ERK}}$. In the following we
focus on a family of second to fourth order ERK methods with \emph{optimal
efficiency ratio} \citep{Ern_Guermond_2022}, meaning, $c_1\eqq 0$ and
$c_{l}-c_{l-1} \eqq \tfrac{1}{s}$. Such methods are optimal in the sense
that the step size of the combined $s$-stage ERK update is
$\dt_{\text{ERK}}=s\dt$, where $\dt$ is the corresponding step size of a
single low-order explicit Euler step. We adopt the notation $\text{RK}(s,
p; 1)$ from \citep{Ern_Guermond_2022}, where $s$ is the number of stages,
$p$ the order of accuracy and $c_{\text{eff}}=1$ is the efficiency ratio.

We now present a reformulation of the IDP-ERK paradigm specialized for this
family of optimal efficiency ratio ERK methods, that is particularly
suitable for a high-performance implementation.
Given a state vector $\bsfU^n$ at time $t^n$ and a (single-step) time-step
size $\tau_n$ satisfying the step size restriction of
Lemma~\ref{Lem:low_order_stability}, we construct a sequence of
updates as follows:
\begin{align}
  \xymatrix{
    \bsfU^n \qqe \bsfU^{(1)} \ar@/_1pc/[r]_{+\,\tau_n} &
    \bsfU^{(2)} \ar@/_1pc/[r]_{+\,\tau_n} &
    \phantom{\bsfU^{(l)}}\ldots\phantom{\bsfU^{(l)}} \ar@/_1pc/[r]_{+\,\tau_n} &
    \bsfU^{(s)} \ar@/_1pc/[r]_{+\,\tau_n} &
    \bsfU^{(s+1)} \qqe \bsfU^{n+1}.}
\end{align}
Notice that here we use the elementary time step $\dt_n$ instead of the
global time step $\dt_{\text{ERK}}$. Recall that $\dt_{\text{ERK}}=s\dt_n$.
Let us introduce the notation $a_{kk}\eqq0$ and $a_{s+1,k}\eqq b_k$ for all
$k\in\intset{1}{s}$. Then we define the weights $w^{(l)}_{k}$ for
$l\in\intset{1}{s}$, $k\in\intset{1}{l}$ as follows:
\begin{align}
  w^{(l)}_{k} \eqq s\,(a_{l+1,k}-a_{l,k}).
\end{align}
We start with $\bsfU^{(1)}\eqq \bsfU^n$. Then, for $l\in[1:s]$, we compute
the $(l+1)$-th stage vector $\bsfU^{(l+1)}$ at time $t^n+l\tau_n$
with the following procedure:
\begin{itemize}
  \item
    Using the previous stage $\bsfU^{(l)}$, we compute the low-order fluxes
    $\bF^{\textup{L},(l)}_{ij}$ with equation~\eqref{eq:low_order_flux}.
    Then we compute the low-order update $\bsfU^{\textup{L},(l+1)}$
    with~\eqref{eq:low_order_scheme} and the time step size $\tau_n$.
  \item
    Using the previous stage $\bsfU^{(l)}$ again, we compute the
    high-order fluxes $\bF^{\textup{H},(l)}_{ij}$ with
    equations~\eqref{eq:high_order_flux}, store these fluxes with the
    previous ones, $\bF_{ij}^{\textup{H},(1)}$, $\ldots,$
    $\bF_{ij}^{\textup{H},(l-1)}$, and set
    \begin{align*}
      \breve\bsfF_{ij}&\eqq
      \sum_{k=1}^{l} w^{(l)}_{k}\FHk_{ij} \quad\text{and}\quad
      \breve\bsfF_{i}=\ssum\breve\bsfF_{ij}.
    \end{align*}
  \item
    Next we compute the fluxes $\breve\bsfP_{ij}$ as in~\eqref{eq:P_ij}:
    \begin{align*}
      \breve\bsfP_{ij} &\eqq \frac{\tau }{m_i
      \lambda_i}\big\{\breve\bsfF_{ij} -
     \FLl_{ij} + b_{ij}\breve\bsfF_j -
      b_{ji}\breve\bsfF_i\big\}.
    \end{align*}
  \item
    Finally, we compute the limiter coefficients $\breve\ell_{ij}$ as
    outlined in Section~\ref{sec:convex_limiting} using
    $\breve\bsfP_{ij}$ into~\eqref{eq:final_update}, and we define
    the high-order update $\bsfU^{(l+1)}$ by setting
    \begin{align*}
      \bsfU_i^{(l+1)} = \bsfU_i^{\textup{L},(l+1)}
      + \snsum\breve\ell_{ij}\lambda_i\breve\bsfP_{ij}.
    \end{align*}
\end{itemize}
The procedure described above inherits at every stage  the properties
listed in Propositions~\ref{prop:conservation},~\ref{prop:IDP},
and~\ref{prop:well-balancing}.

%%%%%%%%%%%%%%%%%%%%%%%%%%%%%%%%%%%%%%%%%%%%%%%%%%%%%%%%%%%%%%%%%%%%%%%%%%%%%%%%
%%%%%%%%%%%%%%%%%%%%%%%%%%%%%%%%%%%%%%%%%%%%%%%%%%%%%%%%%%%%%%%%%%%%%%%%%%%%%%%%
%%%%%%%%%%%%%%%%%%%%%%%%%%%%%%%%%%%%%%%%%%%%%%%%%%%%%%%%%%%%%%%%%%%%%%%%%%%%%%%%

\section{Numerical illustrations}
\label{sec:illustrations}
In this section, we illustrate the proposed method with various
configurations including: \textup{(i)} well-balancing tests; \textup{(ii)}
validation tests for convergence; \textup{(iii)} verification with
small-scale laboratory experiments; \textup{(iv)} realistic flooding scenario with a digital elevation model.

\subsection{Technical details}
The numerical tests are conducted using
the high-performance finite element
code,~\texttt{ryujin}~\citep{ryujin-2021-1, ryujin-2021-3}. The code
uses continuous $\polQ_1$ finite elements on quadrangular meshes for the
spatial approximation and is built upon the \texttt{deal.II} finite element
library~\citep{dealII95}.

To differentiate the temporal approximations, we use the
notation~$\text{RK}(s, p;c_{\text{eff}})$.  The efficiency ratio for the
IDP-ERK schemes introduced in Section~\ref{sec:erk_IDP} is
$c_{\text{eff}}=1$. All the methods with optimal efficiency used in the paper
are summarized in Table~\ref{tab:erks}.
We are also going to use the standard SSP-ERK method
denoted by $\text{RK}(2, 2; \tfrac12)$ and $\text{RK}(3, 3; \tfrac13)$
(see:~\citep[Eq.~2.16]{Shu_Osher1988} and~\citep[Eq.~2.18]{Shu_Osher1988},
respectively). 
\begin{table}[t!]\small \addtolength{\tabcolsep}{-3.2pt}
  \begin{center}
    \subfloat[$\text{RK}(2,2;1)$]{\parbox{0.3\textwidth}{
    \begin{align*}
      \begin{array}{l|rr}
        w^{(l)}_{k} & 0 & 1\\
        \hline\\[-1.0em]
        1 &  1     \\
        2 & -1 & 2
      \end{array}
    \end{align*}}}
    \subfloat[$\text{RK}(3,3;1)$]{\parbox{0.3\textwidth}{
    \begin{align*}
      \begin{array}{l|rrr}
        w^{(l)}_{k} & 0 & 1 & 2\\
        \hline\\[-1.0em]
        1 &            1                     \\
        2 &           -1 &  2                \\
        3 & \sfrac{3}{4} & -2 & \sfrac{9}{4}
      \end{array}
    \end{align*}}}
    \hspace{1em}
    \subfloat[$\text{RK}(4,3;1)$]{\parbox{0.3\textwidth}{
    \begin{align*}
      \begin{array}{l|rrrr}
        w^{(l)}_{k} & 0 & 1 & 2 & 3\\
        \hline\\[-1.0em]
        1 &  1                                                \\
        2 & -1 &            2                                 \\
        3 &  0 &           -1 &              2                \\
        4 &  0 & \sfrac{5}{3} & -\sfrac{10}{3} & \sfrac{8}{3}
      \end{array}
    \end{align*}}}

    \subfloat[$\text{RK}(5,4;1)$]{\parbox{1.0\textwidth}{
    \small 
      \centering \begin{tabular}{l|rrrr}
        $w^{(l)}_{k}$ & 0 & 1 & 2 & 3\\
        \hline\\[-1.0em]
        1 &  1.000000000000000 \\
        2 &  0.303779113477746 &  0.696220886522255 \\
        3 & -2.596605007106260 &  3.860592821791782 & -0.263987814685521 \\
        4 &  2.373989715203703 & -1.980102553333916 & -3.819151895277756 &  4.425264733407969 \\
        5 & -1.606747744309784 &  1.817291202624922 &  1.137969506889054 & -2.114595709136266 \\
      \end{tabular}
      \\[0.5em]
      \hfill $w^{(5)}_{4}$=1.766082743932075
    }}
  \end{center}
  \caption{Weights $w^{(l)}_{k}$ for different optimal IDP-ERK
    schemes ranging from an (a) two-stage, (b) three-stage, (c) four-stage,
    to (d) a five-stage method.}
  \label{tab:erks}
\end{table}

The time step size $\tau_n$ is computed during the first stage of each time
step using the expression
\begin{align}
  \dt_n \eqq \textup{CFL} \max_{i\in\calV}\frac {m_i}{2|d_{ii}\upLn|},
\end{align}
where $\textup{CFL}\in(0,1]$ is a user-defined constant henceforth called
Courant-Friedrichs-Lewy number. The global time step is computed
using $\dt_{\text{ERK}} \eqq c_{\text{eff}} s \dt_n$.

In all the simulations reported below, we take $g=\mynum{9.81}{m\,s^{-2}}$.
To characterize the convergence properties of the method,
we use the following consolidated error indicator for our tests:
\begin{align*}
    \delta^q(T)\eqq \frac{\|\sfH -
    \waterh_{\text{exact}}(T)\|_{L^q(D)}}{\|\waterh_{\text{exact}}(T)\|_{L^q(D)}}
    + \frac{\|\bsfQ -
    \bq_{\text{exact}}(T)\|_{L^q(D)}}{\|\bq_{\text{exact}}(T)\|_{L^q(D)}},
\end{align*}
where $q\in\{1,\infty\}$.

For the sake of brevity, we omit discussing the performance of the
non-reflecting boundary conditions described in
Appendix~\ref{sec:boundary_conditions}. Overall, the non-reflecting
boundary conditions work well and as expected; no issues were observed
regarding significant feedback or violation of the invariant-domain.

%%%%%%%%%%%%%%%%%%%%%%%%%%%%%%%%%%%%%%%%%%%%%%%%%%%%%%%%%%%%%%%%%%%%%%%%%%%%%%%%
\subsection{Well-balancing tests}

In this section, we verify the well-balancing properties of the numerical
method.

\subsubsection{At rest}
To verify the well-balancing at rest, we adopt the three conical bump
topography configuration introduced in~\cite{Kawahara_1986} and initialize
the water depth to $\sfH_0(\bx) = \max(\mynum{1.5}{m} - z(\bx), 0)$ so that
part of the topography is submerged and some is exposed creating a
shoreline. The computational domain is set to $D = [0,
\mynum{75}{m}]\times[0, \mynum{30}{m}]$ with slip boundary conditions. To
make the problem slightly more challenging, we apply some distortion to the
mesh since most realistic topographical data and respective meshes might
not be uniform. We run until final time $T=\mynum{100}{s}$ with CFL 0.9
using $\text{RK}(3, 3; 1)$ and $\text{RK}(3, 3; \tfrac13)$. As shown
in Figure~\ref{fig:wb-distorted-mesh}, no special
treatment is done to align the shoreline with
the mesh throughout the domain. We report the $L^\infty(T)$-norm of the error on the
water depth for two meshes in Table~\ref{tab:well-balancing}.
Inspection of the table shows that the method is indeed
  well-balanced even when the shoreline does not coincide with the
  mesh,
  which is a key improvement over the method proposed
  in \citep[\S5\&6]{Guermond_2018_SISC}.
\begin{figure}[t!]
  \centering
  \includegraphics[trim={0, 0, 0, 0},clip,width=0.90\textwidth]{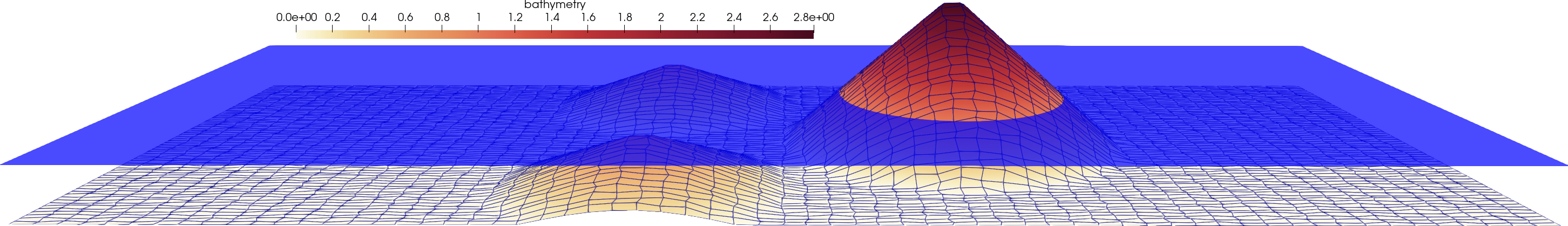}
  \caption{Well-balancing configuration with distorted mesh with 4225
    $\polQ_1$ degrees of freedom.}\label{fig:wb-distorted-mesh}
\end{figure}
\begin{table}[t!]
  \begin{minipage}[t]{0.49\textwidth}
    \centering
    \begin{tabular}{c c c}
      \toprule
      $I$   & $\text{RK}(3, 3; 1)$ & $\text{RK}(3, 3; \tfrac13)$ \\[0.25em]
      4225  & \SI{1.33e-15}{}      & \SI{9.44e-14}{}                   \\
      16641 & \SI{1.33e-15}{}      & \SI{2.12e-13}{}                   \\
      \bottomrule
    \end{tabular}
    \caption{At-rest well-balancing results.}
    \label{tab:well-balancing}
  \end{minipage}
  \begin{minipage}[t]{0.49\textwidth}
    \centering
    \begin{tabular}{c c c}
      \toprule
      $I$  & $\text{RK}(3, 3; 1)$         & $\text{RK}(3, 3; \tfrac13)$ \\[0.25em]
      513  & \SI{6.61675179979799e-14}{}  & \SI{5.299568154477178e-13}{}      \\
      1025 & \SI{1.642182047833548e-14}{} & \SI{6.25744414405246e-13}{}       \\
      \bottomrule
    \end{tabular}
    \caption{Steady flow over inclined plane well-balancing
      results.}
    \label{tab:well-balancing-incline}
  \end{minipage}
\end{table}

\subsubsection{Steady flow over inclined plane with friction}
We now test the well-balancing property for a steady flow over an inclined
plane with Gauckler-Manning friction. The specific configuration that we
consider is that proposed in~\cite[Sec.~4.1]{Chertock_Kurganov_2015} titled
``Example 1'' (Test 2). The domain is set to $D = (0,\mynum{25}{m})$ with
Dirichlet conditions on the left for inflow, and non-reflecting boundary
conditions on the right for dynamic outflow. The topography profile
is defined by $z(x) = -b x$. The unit discharge is initialized with $q(x) =
q_0$. The initial and exact solution for the depth is given by
$\waterh(x)\equiv \waterh_0 = (\frac{n^2q_0^2}{b})^{3/10}$ where $n$ is the
Gauckler-Manning friction coefficient. ; see (2.5)
in~\citep{Chertock_Kurganov_2015}, The coefficients are set to $b =
\mynum{0.01}{}, q_0 = \mynum{0.1}{m^2/s}, n = \mynum{0.02}{m^{-1/3}s}$
which gives approximately $\waterh_0\approx\mynum{0.0956352}{m}$. We run
until final time $T=\mynum{100}{s}$ with CFL 0.5 using $\text{RK}(3, 3; 1)$
and $\text{RK}(3, 3; \tfrac13)$. We report the $\delta^\infty(T)$
error for two meshes in Table~\ref{tab:well-balancing-incline}.
Well-balancing is again achieved in this case.

\subsubsection{Rainfall over inclined plane with friction}
\begin{figure}[t]
  \centering
  \includegraphics[trim={0, 0, 5, 0},clip,width=0.49\textwidth]{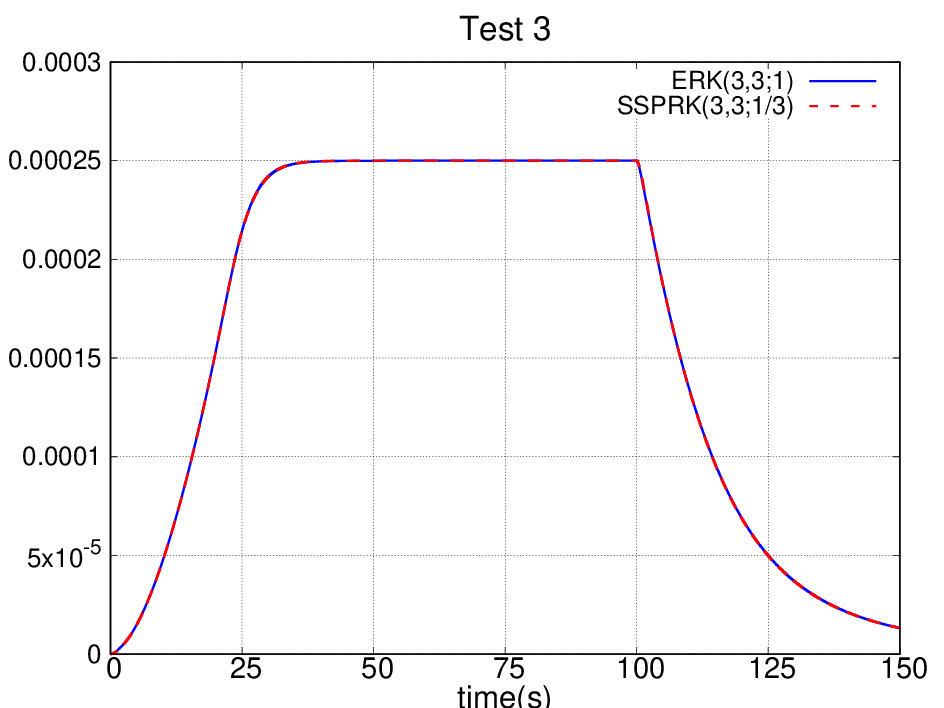}
  \includegraphics[trim={0, 0, 5, 0},clip,width=0.49\textwidth]{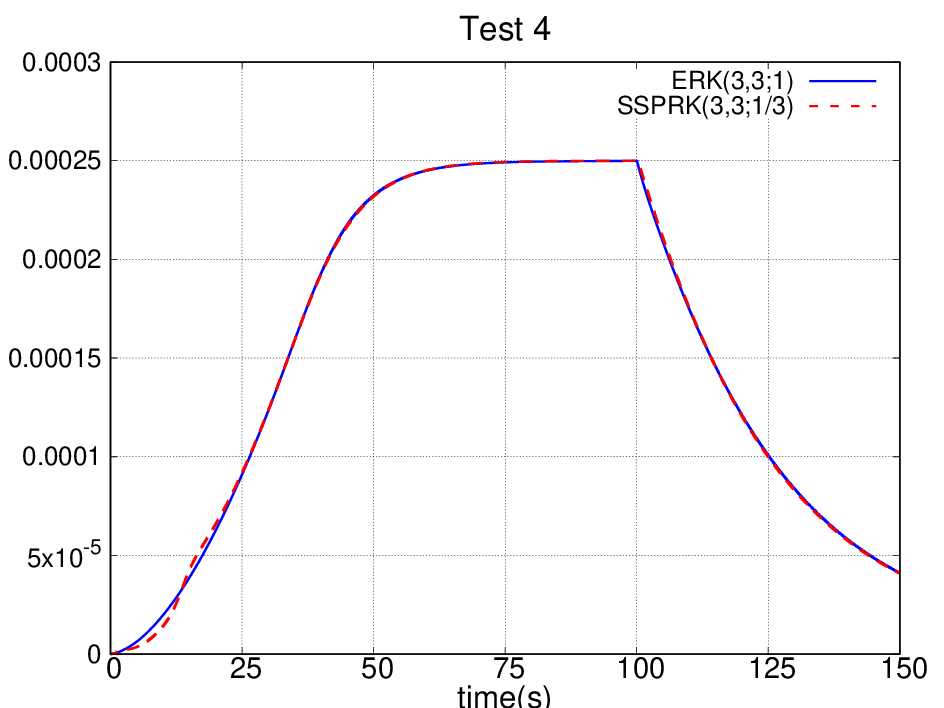}
  \caption{Comparison of discharge at outlet boundary ($x =
    \mynum{2.5}{m}$) for the ``Test 3'' and ``Test 4'' configurations from
    Table 2 in~\citep{Chertock_Kurganov_2015}.}
  \label{fig:rain-test}
\end{figure}
We now test the method's ability to handle both rainfall and friction
effects as sources. We again use the inclined plane bathymetry from the
previous section, but now follow the configuration
in~\citep[Sec.~4.1]{Chertock_Kurganov_2015} titled ``Example 3''. Here, the
initial configuration is set to dry with a constant rain source $R(x) =
\mynum{1e-4}{m s^{-1}}$ active in the interval $0\leq t\leq\mynum{100}{s}$.
The specific test cases we reproduce are ``Test 3'' and ``Test 4'' from
Table 2 in~\citep{Chertock_Kurganov_2015}. The domain is set to $D =
(0,\mynum{2.5}{m})$ with slip conditions on the left and do nothing
boundary conditions on the right. For both tests, we run until final time
$T=\mynum{150}{s}$ with CFL 0.5 using $\text{RK}(3, 3; 1)$ and
$\text{RK}(3, 3; \tfrac13)$.The discharge was measured over time at the right
boundary for the simulation for each test. We report the time history
of the discharge in
Figure~\ref{fig:rain-test}. We observe comparable results to those reported
in~\citep[Fig.~5]{Chertock_Kurganov_2015}. For the $\text{RK}(3, 3;
\tfrac13)$ results in Test 4, there are some slight oscillations at the
beginning of the simulation.

%%%%%%%%%%%%%%%%%%%%%%%%%%%%%%%%%%%%%%%%%%%%%%%%%%%%%%%%%%%%%%%%%%%%%%%%%%%%%%%%
\subsection{Convergence tests}

In this section, we verify the accuracy of the proposed method. For the
sake of brevity, we only report results in two space dimensions since we observe
similar behavior in one space dimension.

\subsubsection{Smooth vortex}
\label{sec:vortex}
\begin{table}[t]
  \centering
  \begin{tabular}{ccccc}
    \toprule
    $I$     & $\text{RK}(3, 3; 1)$ & rate & $\text{RK}(3, 3; \tfrac13)$ & rate \\[0.25em]
    1089    & \num{0.00357866}     & -    & \num{0.00357241}                  & -    \\
    4225    & \num{0.000628104}    & 2.51 & \num{0.000627444}                 & 2.51 \\
    16641   & \num{8.41359e-05}    & 2.90 & \num{8.39931e-05}                 & 2.90 \\
    66049   & \num{1.09506e-05}    & 2.94 & \num{1.09353e-05}                 & 2.94 \\
    263169  & \num{1.42512e-06}    & 2.94 & \num{1.42405e-06}                 & 2.94 \\
    1050625 & \num{2.81145e-07}    & 2.34 & \num{ 1.91891e-07}                & 2.89 \\
    \bottomrule
  \end{tabular}
  \caption{Error $\delta^1(T)$ and convergence rates for smooth vortex test
    with CFL 0.25.}
  \label{tab:vortex-L1}
\end{table}
We now demonstrate the convergence of the method with a smooth analytical
solution of the Shallow Water Equations. This benchmark is a
divergence-free vortex adapted (and slightly modified)
from~\cite[Sec.~2.3]{Ricchiuto_Bollermann_2009} which mimics geophysical
flows~\citep{ricchiuto2007application}. Let $(\waterh_\infty, \bv_\infty)$
be the far-field state. Then, the analytical solution is defined as
follows:
\begin{subequations}\label{unsteady-vortex}
  \begin{align}
    & \waterh(\bx, t) = \waterh_\infty -\frac{1}{2 g r_0^2}\psi(\overline{\bx})^2,                                 \\
    & \bv\eqq\bv_\infty + \delta\bv,                                                                               \\
    & \delta\bv(\bx, t)\eqq\left(\partial_{x_2}\psi(\overline{\bx}),-\partial_{x_1}\psi(\overline{\bx})\right)\tr,
  \end{align}
\end{subequations}
with $\overline{\bx}\eqq\bx - \bx^0 - \bv_\infty t$ and
$\psi(\bx)\eqq\frac{\beta}{2\pi}\exp(\tfrac12(1 -
\frac{\|\bx\|^2_{\ell^2}}{r_0^2}))$. Here, $\bx^0$ can be thought of as the
center of the vortex, $\beta$ the vortex strength and $r_0$ the radius of
the vortex. The parameters are set to $\waterh_\infty =
\mynum{2}{m},\,\beta = \mynum{2},\,r_0 = \mynum{1}{m},\,\bv_\infty =
(1,1)\mynum{}{m s^{-1}}$. The computational domain is set to
$D=(-6,\mynum{6}{m})\times(-6,\mynum{6}{m})$ with Dirichlet boundary
conditions. We set the final time to $T=\mynum{2}{s}$. The time-stepping is performed with $\text{RK}(3, 3; 1)$ and $\text{RK}(3, 3; \tfrac13)$
with CFL 0.25. We report the consolidated $\delta^\infty(T)$ error and rates in
Table~\ref{tab:vortex-L1}. We observe close to third order accuracy in
  time and space. The super-convergence in space is compatible with the
theoretical result from \citep[Prop.~A.1]{Guermond_Pasq_2013}.

\subsubsection{Planar surface flow in paraboloid-shaped basin}
\begin{table}[t]
  \centering
  \begin{tabular}{ccccc}
    \toprule
    $I$     & $\text{RK}(3, 3; 1)$ & rate & $\text{RK}(3, 3; \tfrac13)$ & rate \\[0.25em]
    1089    & \num{0.221698}       & -    & \num{0.22683}                     & -    \\
    4225    & \num{0.0632759}      & 1.81 & \num{0.0647327}                   & 1.81 \\
    16641   & \num{0.0172272}      & 1.88 & \num{0.0178517}                   & 1.86 \\
    66049   & \num{0.00510551}     & 1.75 & \num{0.00535852}                  & 1.74 \\
    263169  & \num{0.0017404}      & 1.55 & \num{0.00183409}                  & 1.55 \\
    1050625 & \num{0.000704076}    & 1.31 & \num{0.000738643}                 & 1.31 \\
    \bottomrule
  \end{tabular}
  \caption{Error $\delta^1(T)$ and convergence rates for the test
    configuration with a planar surface flow in a paraboloid-shaped basin.}
  \label{tab:parab}
\end{table}
We now demonstrate the convergence of the method with Thacker's planar
surface flow in paraboloid-shaped basin~\citep{thacker1981}. The
problem consists of a free-surface moving in a periodic motion inside
a paraboloid-shaped basin.The moving shoreline is circular at all
  times. The precise configuration we use is the one introduced
in~\cite[Sec.~4.2.2]{delestre2013swashes} subsection ``Planar surface
in a paraboloid:'' The computational domain is defined as
$D = [0, \mynum{4}{m}]\times[0,\mynum{4}{m}]$ with slip boundary
conditions. The theoretical period of the motion is
  $2\pi/\sqrt{2 g \waterh_0}$ with $\waterh_0=\mynum{0.1}{m}$. The final time is
  three periods, approximately
$T = \qty[round-mode=places, round-precision=8,
scientific-notation=false]{13.45710440}{s}$. The time-stepping is
performed with $\text{RK}(3, 3; 1)$ and $\text{RK}(3, 3; \tfrac13)$
with CFL 0.5. We report the consolidated $\delta^1(T)$ error and rates
in Table~\ref{tab:parab}. We observe a convergence rate 
  ranging from 1.8 to 1.3, which is consistent with what is reported in
  the literature.

%%%%%%%%%%%%%%%%%%%%%%%%%%%%%%%%%%%%%%%%%%%%%%%%%%%%%%%%%%%%%%%%%%%%%%%%%%%%%%%%
\subsection{Small-scale laboratory experiments}

In this section, we simulate two small-scale laboratory experiments
described in~\cite{martinez_2018}. The goal of the experiments was to
provide validation data for shallow water solvers by studying complex
steady and transient flume experiments. The experiments comprised of
transcritical steady flow and dam-break flow around obstacles and complex
beds. In this paper, we reproduce cases ``G2-S.2'' and ``G3-D.1'' described
in Sections 4.3.1 and 4.4.2 in~\citep{martinez_2018}, respectively. We
refer the reader to~\citep{martinez_2018} for a detailed description of the
experimental configuration. The set up can also be found in the source code
for the \texttt{ryujin} software.

\subsubsection{G2-S.2}
\begin{figure}[t]
  \centering
  \includegraphics[trim={0, 0, 0, 0},clip,width=0.95\textwidth]{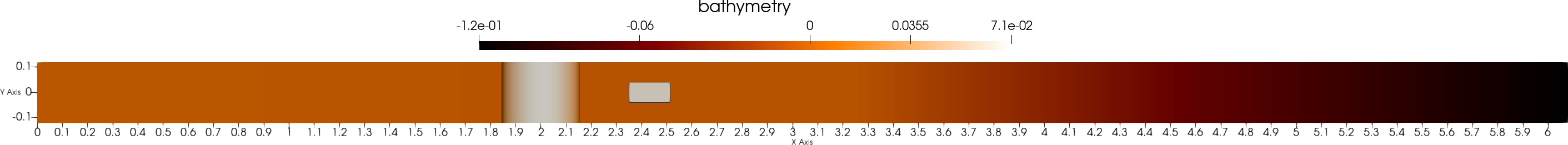}
  \caption{Top view representation for the ``G2-S.2'' test case.}
  \label{fig:G2-setup}

  \centering
  \includegraphics[trim={0, 0, 0, 0},clip,width=0.95\textwidth]{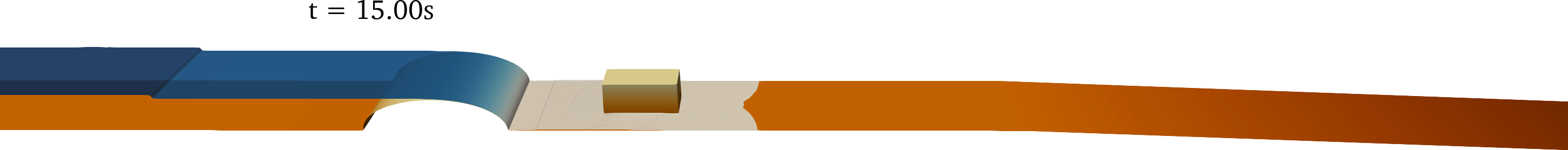}
  \includegraphics[trim={0, 0, 0, 0},clip,width=0.95\textwidth]{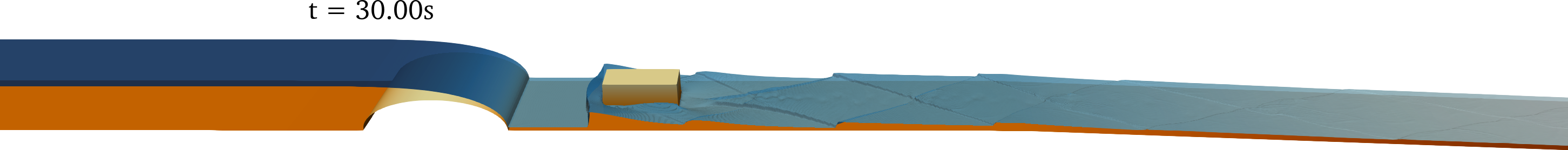}
  \includegraphics[trim={0, 0, 0, 0},clip,width=0.95\textwidth]{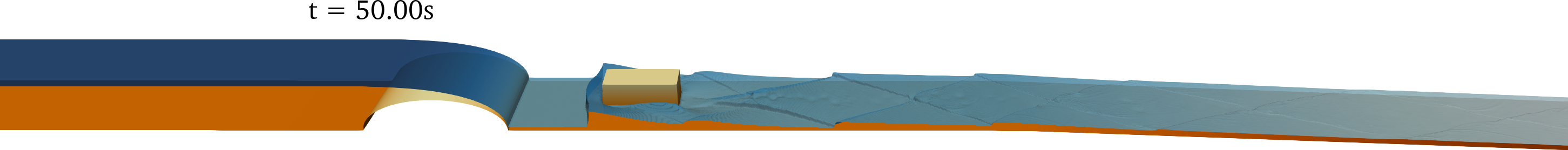}
  \caption{Time snapshots for ``G2-S.2'' showing water elevation and bathymetry.}\label{fig:G2-output}

  \centering
  \includegraphics[trim={0, 10, 25, 20},clip,width=0.48\textwidth]{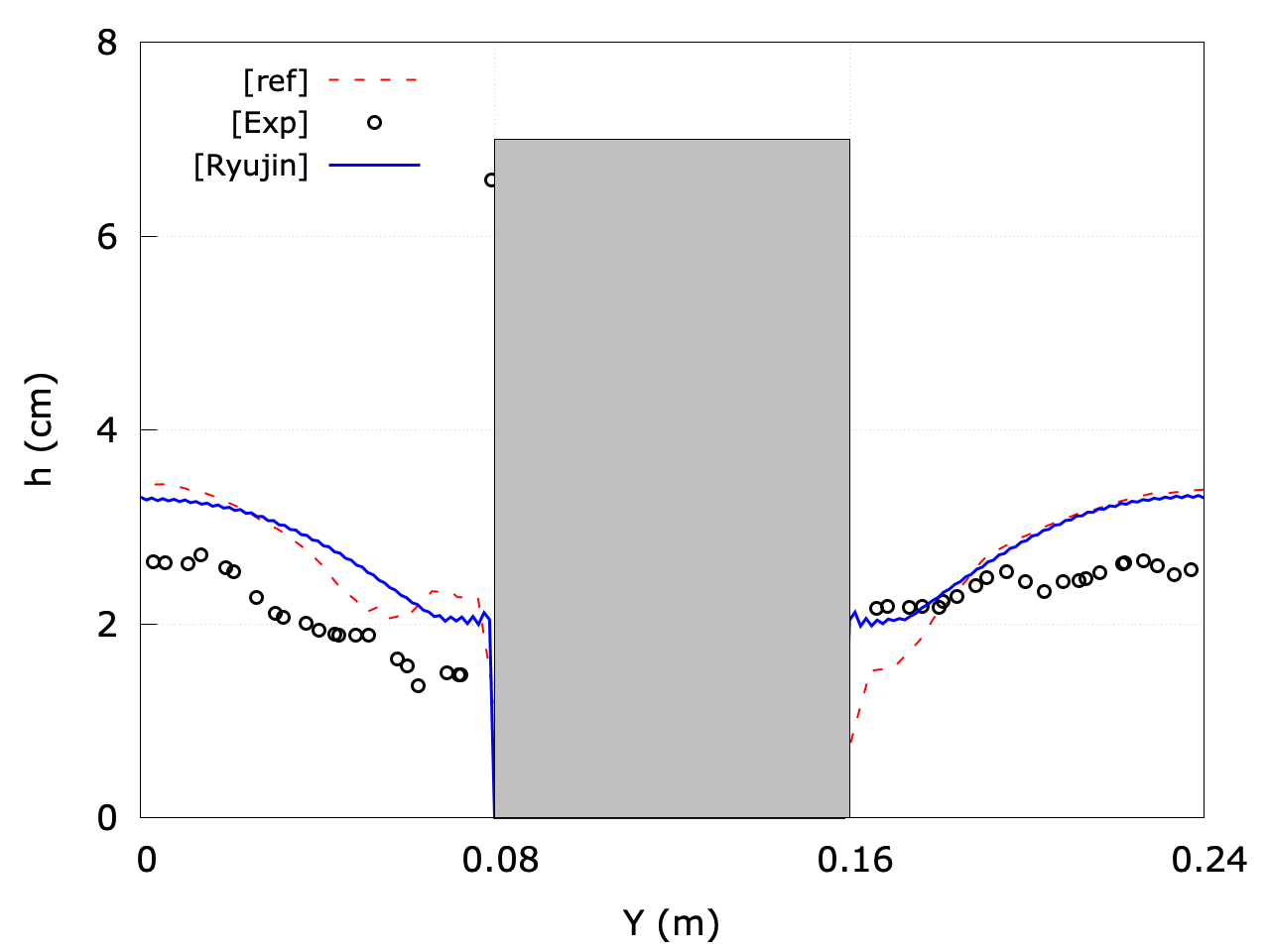}
  \includegraphics[trim={0, 10, 25, 15},clip,width=0.48\textwidth]{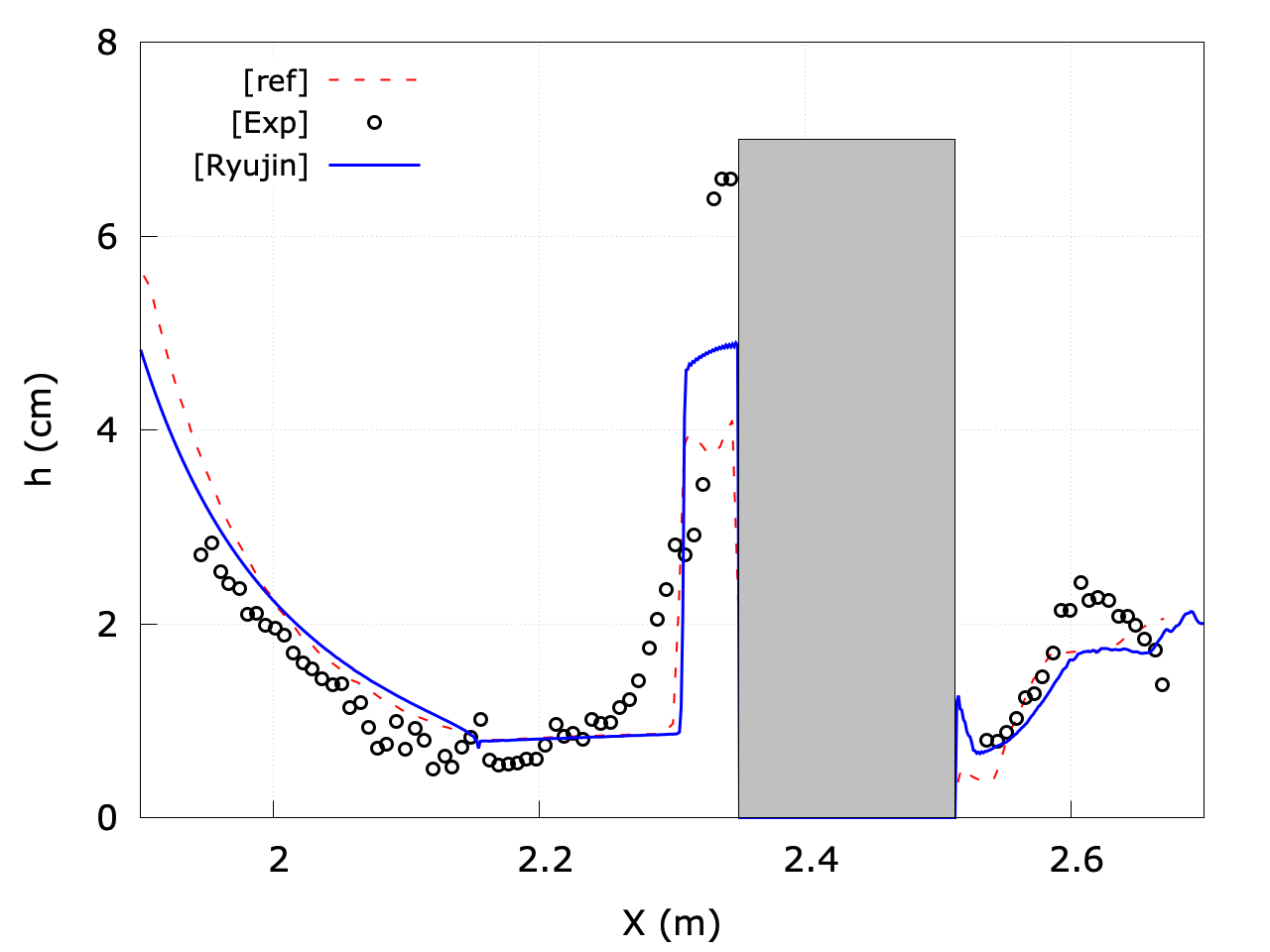}
  \caption{Comparison of numerically computed water depth (solid blue line)
    for the ``G2-S.2'' configuration along the two sections with
    experimental data (black circles) and corresponding simulation data
    from~\citep{martinez_2018} (red dashed line).}
  \label{fig:G2-exps}
\end{figure}
The ``G2-S.2'' test case consists of a steady inflow discharge of $\sfQ_0 =
\mynum{9.01}{m^3 / h}$ with the flume containing a semi-circular bump
across its width followed by a rectangular obstacle placed at the center
line of the flume. We give a top view representation of the set up in
Figure~\ref{fig:G2-setup}.
Note that the discharge here is the volumetric flow rate. For our
simulation, we use the unit flow discharge $q_0 = \sfQ_0 / \mynum{0.24}{m}$
which gives $q_0 = \mynum{0.0104}{m^2 / s}$ (here, the $\mynum{0.24}{m}$
corresponds to the flume width). We reproduce this case using the
computational domain $D = [0, \mynum{6.078}{m}]\times[-0.12,
\mynum{0.12}{m}]$ with 928137 $\polQ_1$ degrees of freedom (this
corresponds to the mesh-size being roughly \mynum{1.25}{mm} in each
direction). Note that we omit discretizing the tank reservoir since it is
not needed for simulating steady inflow. The initial set up consists of a
dry flume where the bottom/top boundaries are set to slip boundary
conditions and the right boundary is set to non-reflecting boundary
conditions for dynamic outflow. On the left boundary, we enforce the steady
inflow discharge and do nothing boundary conditions for the water depth. We
run the simulation with $\text{RK}(4, 3; 1)$ until $T = \mynum{50}{s}$ with
CFL 0.9 to allow the flow to reach a steady state. We output four time
snapshots for $t = \{\SI{15}{s},\SI{30}{s},\SI{50}{s}\}$ in Figure~\ref{fig:G2-output}.

In the experiments, the water depth was measured at two different
sections: $x = \mynum{2.40}{m}$ (across width of flume spanning the
rectangular obstacle) and $y = \mynum{0}{m}$ (centerline of the
flume). In Figure~\ref{fig:G2-exps}, we compare the numerical output
of our simulations along the sections and compare with the
experimental data as well as the simulation data reported
in~\citep{martinez_2018}. Overall, our simulation compares well with
the experiments and simulations
from~\citep{martinez_2018}. The discrepancies between the
  numerical simulations are due to mesh resolution differences. The
  discrepancies with the experiments show the shortcomings of the
  shallow water equations and that, short to solving the Navier-Stokes
  equations, a higher-fidelity model is required.

\subsubsection{G3-D.1}
\begin{figure}[t]
  \centering
  \includegraphics[trim={0, 0, 0, 0},clip,width=0.85\textwidth]{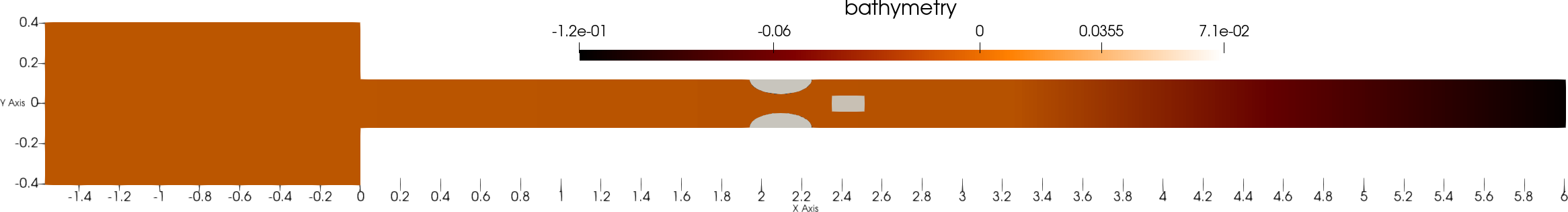}
  \caption{Top view representation for ``G3-D.1'' case.}
  \label{fig:G3-setup}

  \centering
  \includegraphics[trim={0, 0, 0, 0},clip,width=0.49\textwidth]{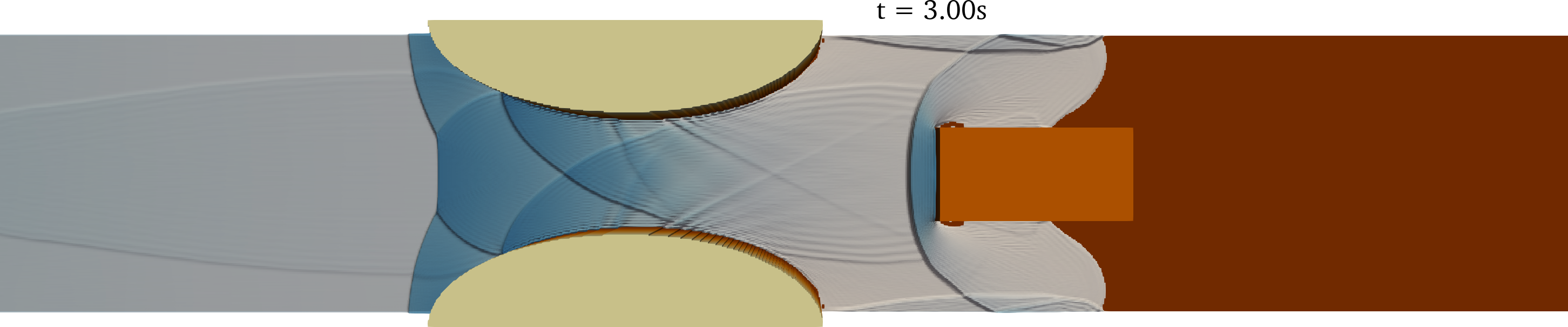}
  \includegraphics[trim={0, 0, 0, 0},clip,width=0.49\textwidth]{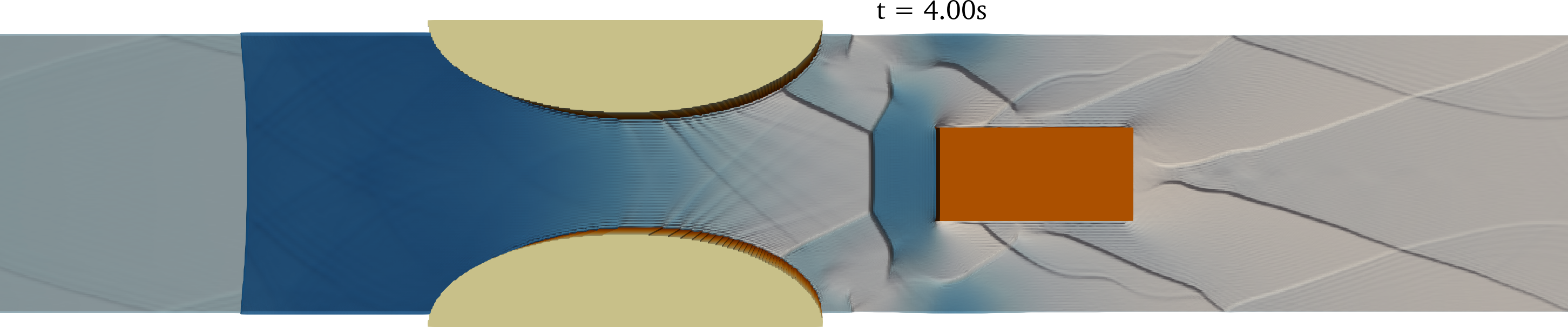}\par
  \includegraphics[trim={0, 0, 0, 0},clip,width=0.49\textwidth]{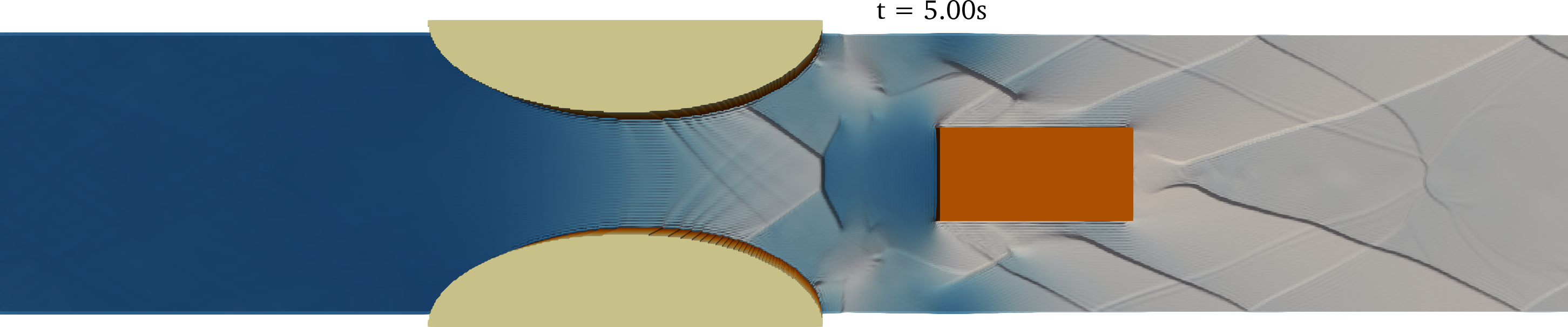}
  \includegraphics[trim={0, 0, 0, 0},clip,width=0.49\textwidth]{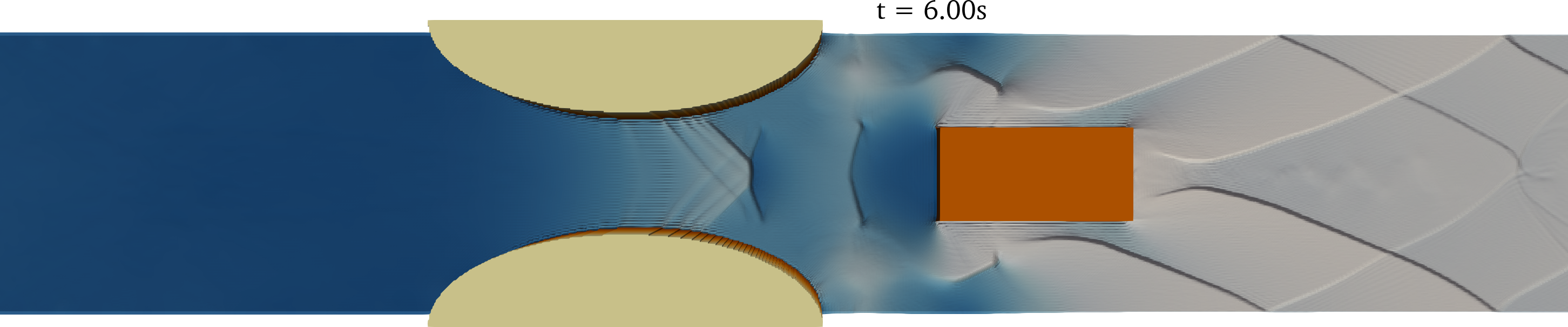}
  \caption{Time snapshots for ``G3-D.1'' showing water elevation and bathymetry.}\label{fig:G3-output}
\end{figure}
The case ``G3-D.1'' consists of a dam-break flow with height
$\sfH_0 = \mynum{0.055}{m}$ in the reservoir and the flume containing
two semi-circular Venturi constriction elements followed by
rectangular obstacle placed at the center line of the flume. We give a
top view representation of the set up in Figure~\ref{fig:G3-setup}. We
reproduce this case using the computation domain
$D = D_\text{res}\cup D_\text{flume}$ where
$D_\text{res} = [-1.58, \mynum{0}{m}]\times[-0.405, \mynum{0.405}{m}]$
and
$D_\text{flume} = [0, \mynum{6.078}{m}]\times[-0.12, \mynum{0.12}{m}]$
with 1753793 $\polQ_1$ degrees of freedom (this corresponds to the
mesh-size being roughly \mynum{1.25}{mm} in each direction). The flume
is initially dry.  The slip boundary condition is enforced on all
  the boundaries except the right-most one.  Non-reflecting boundary
conditions are enforced on the right boundary for dynamic outflow. We
run the simulation with $\text{RK}(4, 3; 1)$ until $T = \mynum{20}{s}$
with CFL 0.9. We output four time snapshots of the water elevation for
$t = \{\SI{3}{s},\SI{4}{s},\SI{5}{s},\SI{6}{s}\}$ in
Figure~\ref{fig:G3-output}.

\begin{figure}[t!]
  \centering
  \includegraphics[trim={0, 10, 0, 20},clip,width=0.70\textwidth]{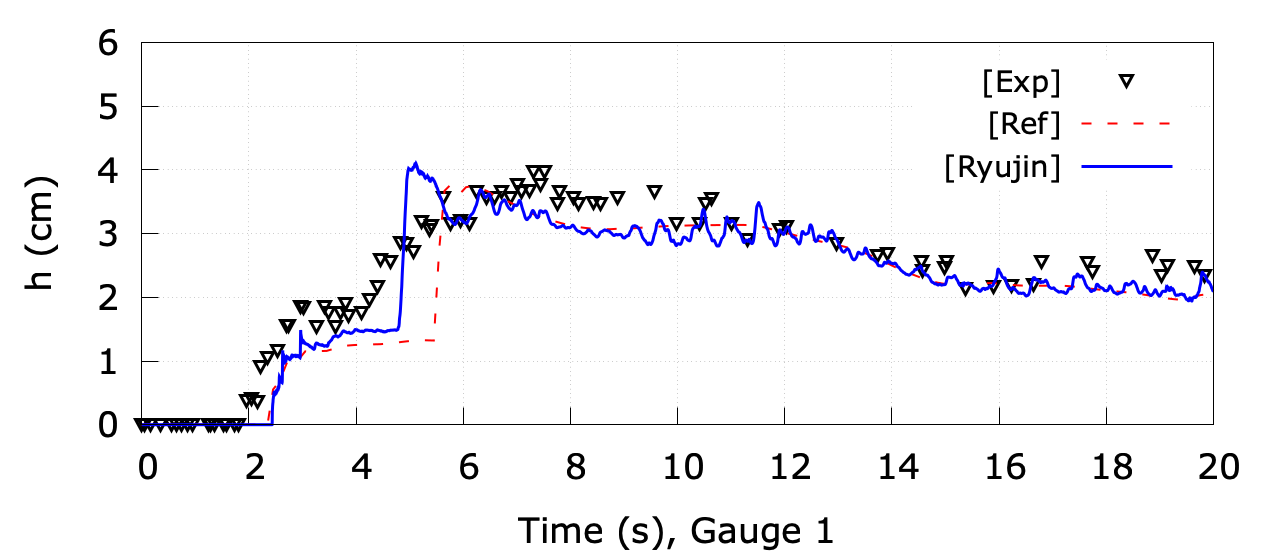}
  \includegraphics[trim={0, 10, 0, 15},clip,width=0.70\textwidth]{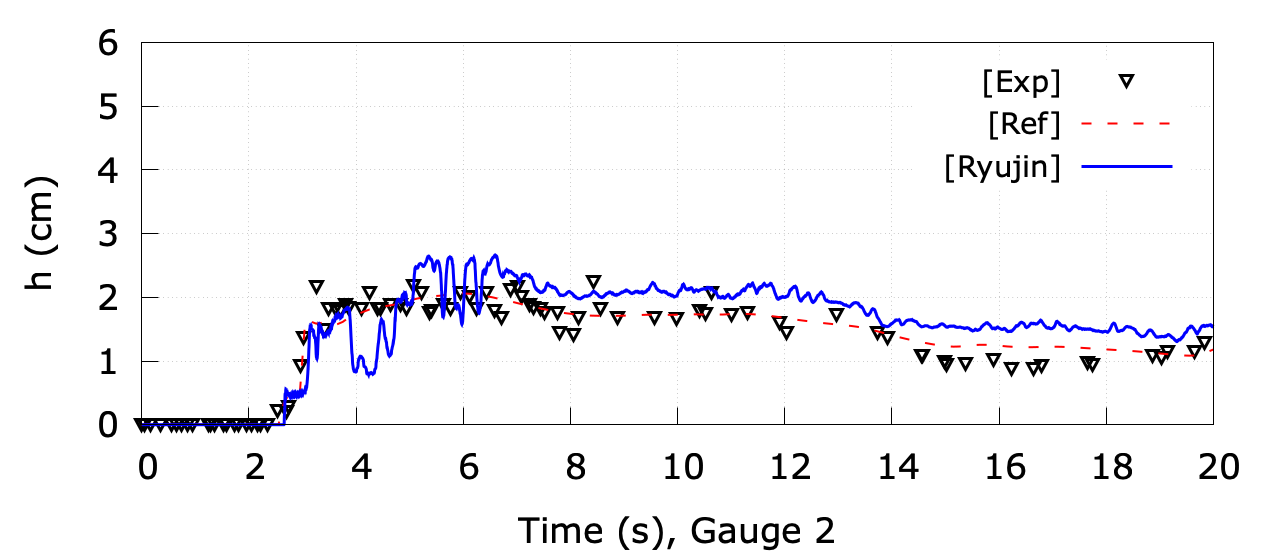}
  \includegraphics[trim={0, 10, 0, 15},clip,width=0.70\textwidth]{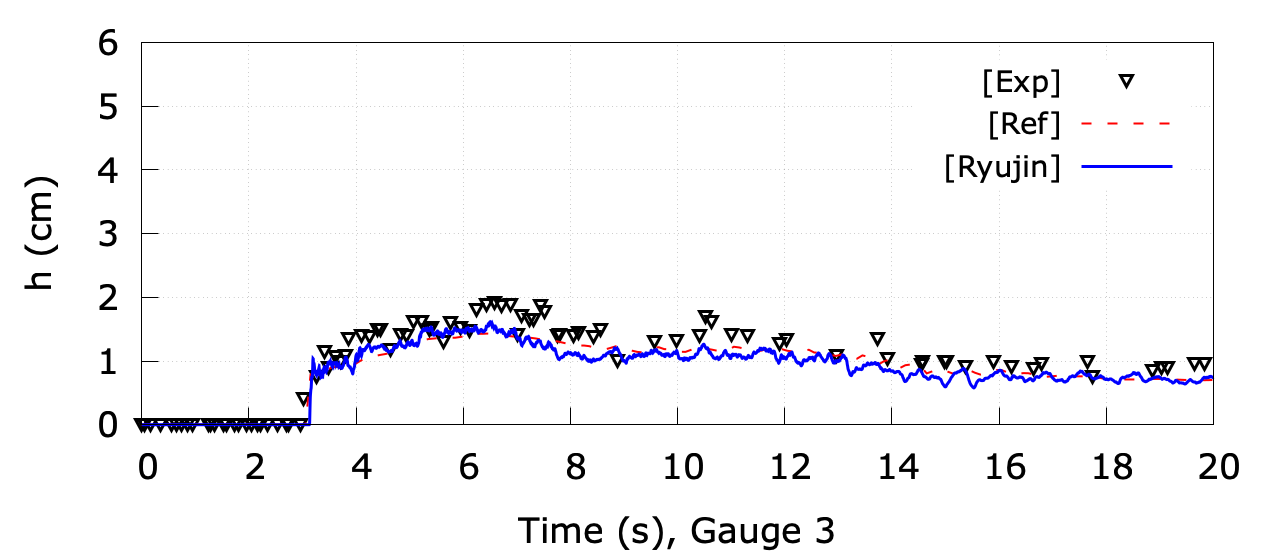}
  \caption{Temporal series over $t\in[0,\mynum{20}{s}]$ comparing numerical
    water depth (blue solid line), experimental data (black triangles) and
    simulation data from~\citep{martinez_2018} (red dashed line). From top
    to bottom: Gauge 1, Gauge 2, Gauge 3.}
  \label{fig:G3-exps}
  \vspace{1.5cm}

  % table for efficiency tests
  %
  \centering
  \begin{tabular}{lrrr}
    \toprule
    Method                            & \# cycles & throughput            & runtime \\[0.3em]
    $\text{RK}(2, 2; \tfrac12)$ & 1019      & \mynum{1.3564}{ MQ/s} & \mynum{99.64}{s} \\
    $\text{RK}(2, 2; 1)$              & 510       & \mynum{1.2762}{ MQ/s} & \mynum{52.99}{s} \\
    $\text{RK}(3, 3; \tfrac13)$ & 1019      & \mynum{1.3578}{ MQ/s} & \mynum{149.20}{s}\\
    $\text{RK}(3, 3; 1)$              & 340       & \mynum{1.2511}{ MQ/s} & \mynum{54.01}{s} \\
    $\text{RK}(4, 3; 1)$              & 255       & \mynum{1.2312}{ MQ/s} & \mynum{54.91}{s} \\
    $\text{RK}(5, 4; 1)$              & 204       & \mynum{1.1494}{ MQ/s} & \mynum{58.79}{s} \\
    \bottomrule
  \end{tabular}
  \captionof{table}{%
    Efficiency comparison of various time-stepping schemes. We report the
    total number of cycles for a full Runge-Kutta step to reach final
    time $T=\mynum{0.5}{s}$ with CFL 0.2, the average throughput measured
    in \emph{million $Q_1$-mesh points per second} (MQ/s) for a single
    Runge-Kutta substep (consisting of a single forward Euler
    step) and the total runtime to reach the final simulation time.}
  \label{fig:efficiency_tests}
\end{figure}
In the experiments reported in~\citep{martinez_2018}, three wave gauges
were placed in the basin to measure the water depth over the duration of
the experiment. The specific locations of the gauges are given by: ``Gauge
1'' $(\mynum{225}{cm}, \mynum{12}{cm})$; ``Gauge 2'' $(\mynum{240}{cm},
\mynum{20}{cm})$; ``Gauge 3'' $(\mynum{260}{cm}, \mynum{12}{cm})$. In
Figure~\ref{fig:G3-exps}, we compare the numerical output of our
simulations with the experimental data and the
simulation data reported in~\citep{martinez_2018}. Overall, our simulation
compares well with the experiment and simulations from~\citep{martinez_2018}.

%%%%%%%%%%%%%%%%%%%%%%%%%%%%%%%%%%%%%%%%%%%%%%%%%%%%%%%%%%%%%%%%%%%%%%%%%%%%%%%%
\subsection{Efficiency tests}\label{sec:efficiency}
We now report a quick efficiency test to compare various time-stepping
techniques. We choose the smooth vortex benchmark described
in~\ref{sec:vortex} and run until final time $T = \mynum{0.5}{s}$ with CFL
0.2 with the mesh composed of 66,049 $\polQ_1$ degrees of freedom. Each
simulation is performed on a single rank and single thread on a laptop
computer. In Table~\ref{fig:efficiency_tests}, we report the results of our
tests. We see that the overall efficiency
  of each method is directly proportional to the efficiency coefficient $c_{\text{eff}}$.

\subsection{High-fidelity simulation}
\begin{figure}[p!]
  \centering
  \subfloat[$t=0h$]{\includegraphics[trim=0 100 0 10, clip, width=0.88\textwidth]{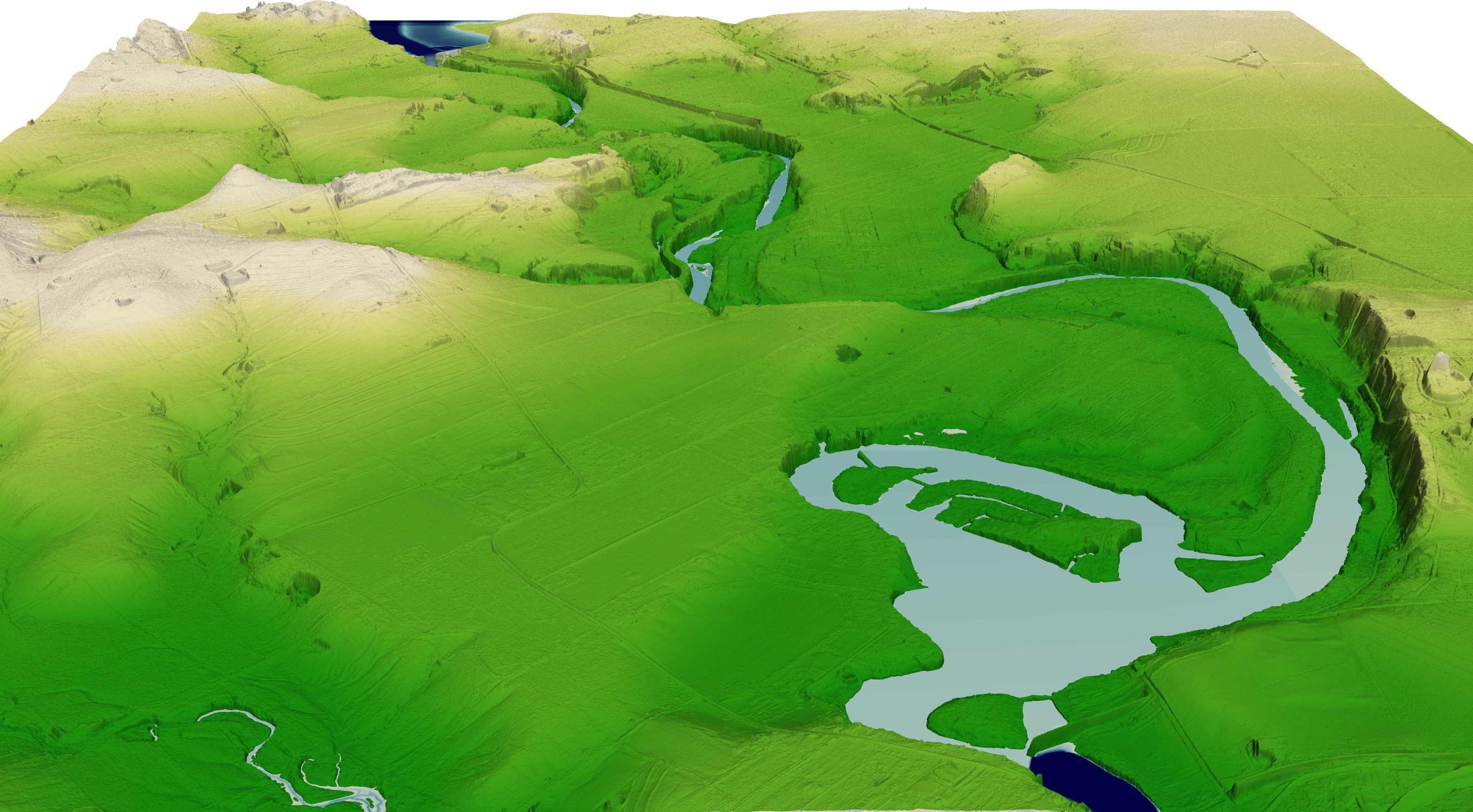}}

  \subfloat[$t=1h$]{\includegraphics[trim=0 100 0 10, clip, width=0.88\textwidth]{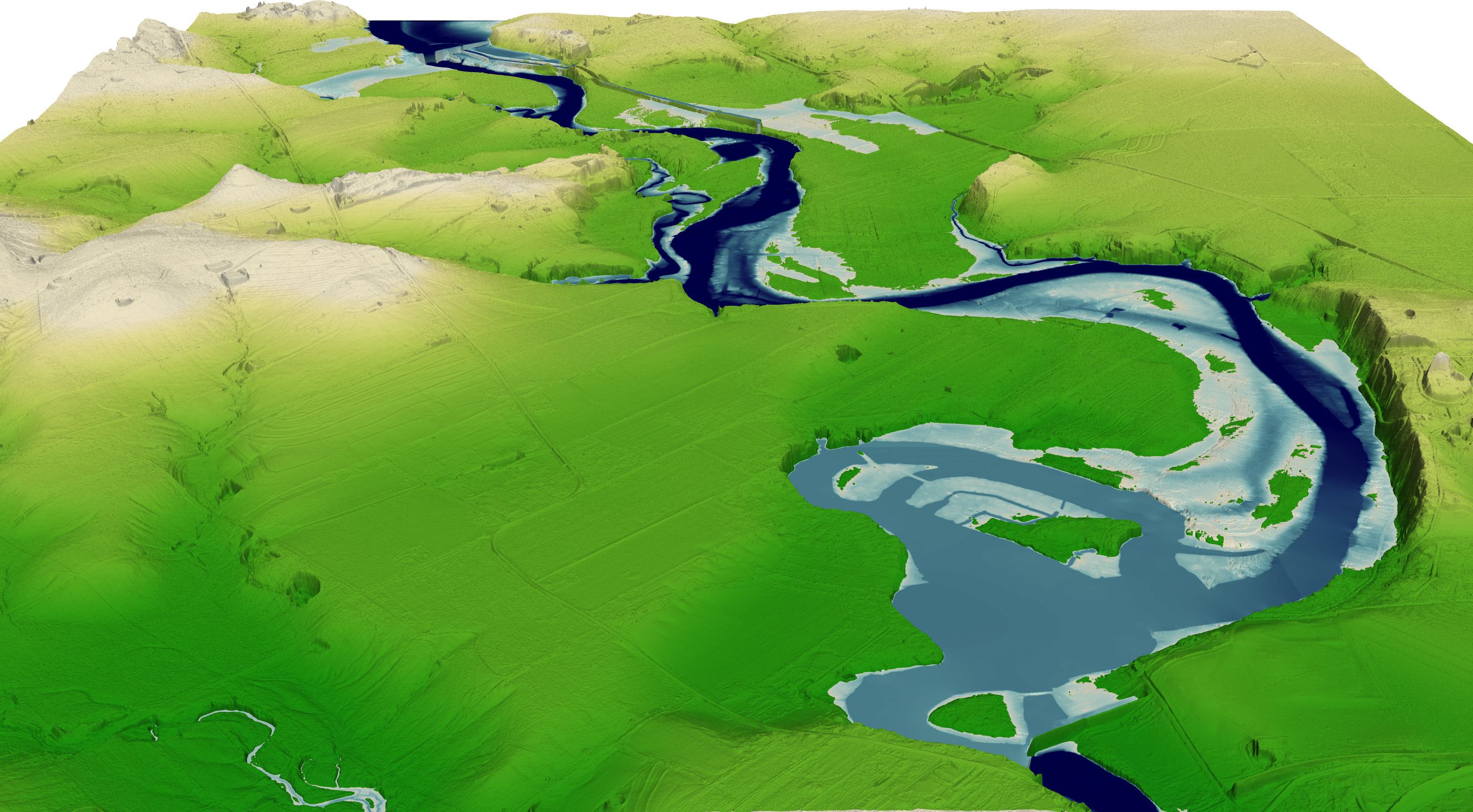}}

  \subfloat[$t=2h$]{\includegraphics[trim=0 100 0 10, clip, width=0.88\textwidth]{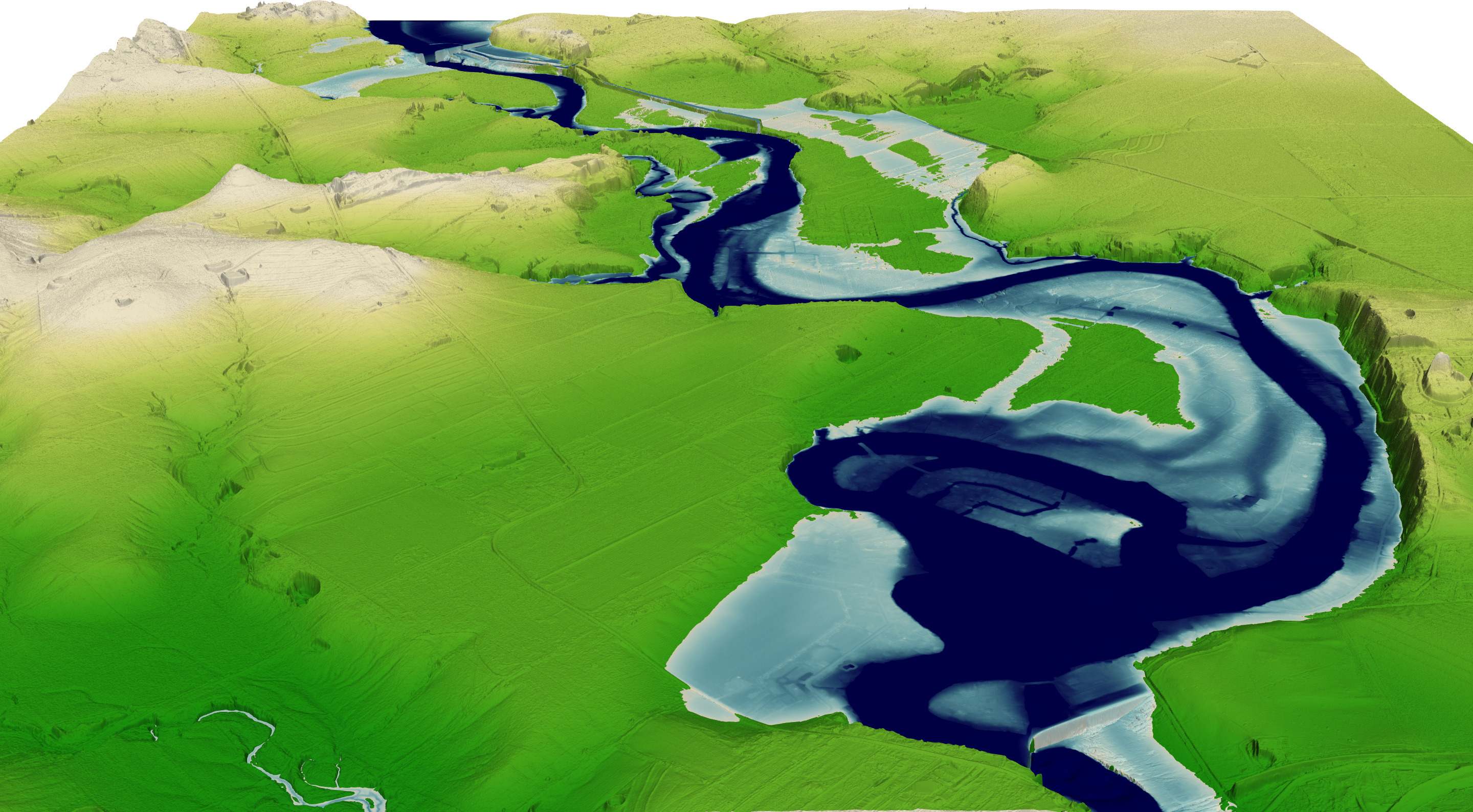}}

  \caption{
    Temporal snapshots for the high-fidelity simulation of a dam break with
    $58{,}735{,}617$ $\polQ_1$ degrees of freedom per component. The
    figures show a three dimensional rendering of the bathymetry $z(\bx)$ with
    a color scale ranging from dark green to light ocre, and the water
    surface $\waterh + z$ with a color scale ranging from dark blue (large
    $\waterh$) to light blue (small $\waterh$). The vertical direction is scaled by a
    factor 10.}
  \label{fig:dam_break}
\end{figure}
Lastly we perform a high-fidelity dam break simulation with the shallow
water equations using realistic topography data. To this end we linked
the~\texttt{ryujin} software to the \emph{Geospatial Data Abstraction
Library} (\texttt{GDAL})\footnote{\url{https://gdal.org}} in order to read
in digital elevation models (DEMs). The DEM considered here was obtained
from the \emph{United States Geological Survey 3D Elevation
Program}~\cite{USGS2021} via
\emph{OpenTopography}\footnote{\url{https://opentopography.org}} and shows
a portion of Lake Dunlap and the Guadalupe river in the state of Texas with
a spatial resolution of $\mynum{1}{m}\times\mynum{1}{m}$. We simulate the
breaking of the dam at Lake Dunlap. The simulation was performed on the
computational domain $D = [0, \mynum{7168}{m}]\times[0, \mynum{8192}{m}]$
which is the bounding box for the DEM data until final time $T=\SI{2}{h}$.
We use a Gauckler-Manning's friction source term with a roughness
coefficient of $n = \mynum{0.025}{m^{-1/3}s}$ and a gravity constant of $g
= \mynum{9.81}{ms^{-2}}$. We set up an initial water column with $\waterh + z =
\mynum{179.5}{m}$ (above sea level) for the upper basin and $\waterh + z =
\mynum{163.5}{m}$ slowly sloping down to $\mynum{161.3}{m}$ for the river
bed; both with zero initial velocity. On the northern boundary of the
domain we enforce Dirichlet conditions with $\waterh + z =\mynum{182.5}{m}$
which ensure that the upper basin is always filled. We emphasize that we
chose this flow configuration purely for demonstration purposes and that it
is not particularly realistic: we do not factor in the finite amount of
water stored in the upper basin (as it is in fact not fully simulated); we
make no attempt of creating a realistic initial configuration of the
downstream river with correct water height and stream velocity; and the DEM
does not contain bathymetry information of the river bed.

The computation was performed with $58{,}735{,}617$ $\polQ_1$ degrees of
freedom per component on 768 ranks (and 2 threads per rank) on the Whistler
cluster at Texas A\&M University using single-precision floating point
arithmetic. We performed $152{,}637$ $\text{RK}(3, 3; 1)$ steps with a
chosen CFL number of 0.9 resulting in an average time step size of
$\dt_{\text{ERK}}\approx\mynum{4.72e-02}{s}$. The total CPU time summed over
all ranks was about \mynum{9820}{h} with an average per-rank throughput of
about $\mynum{1.52}{MQ/s}$, where MQ/s stands for \emph{million
$\polQ_1$-mesh point updates per second} for a single Runge-Kutta substep
(consisting of a single forward-Euler step). We recorded a total runtime
of approximately $\mynum{6.43}{h}$ (wall time) which equates to a combined
throughput over all ranks of about $\mynum{1159.25}{MQ/s}$.

We visualize in Figure~\ref{fig:dam_break} the simulation results for
temporal snapshots at initial time $t=\mynum{0}{h}$, at $t=\mynum{1}{h}$,
and at $t=\mynum{2}{h}$. The figure shows a three dimensional rendering of
the bathymetry $z(\bx)$ with a color scale ranging from dark green to light
ocre where the bathymetry is scaled by a factor 10. Similarly, the water
surface $\waterh + z$ is overlayed with the same scaling factor and with a
color scale ranging from dark blue to light blue for large to small values
of $\waterh$.

\section{Conclusion}
\label{sec:conclusion}
In this work, we provided a high-order space and time approximation of the
shallow water equations with external sources on unstructured meshes. The
numerical method was shown to be invariant-domain preserving and
well-balanced with respect to rest states whether the shoreline is
aligned with the mesh or not. The method was also shown to be
robust with respect to external source terms.
This work will be the stepping stone for various multi-physics extensions of the shallow water
  equations like the Serre-Green-Naghdi Equations which account for dispersive water
  waves or subsurface
models such as Richard's equation.

\section*{Acknowledgments}
The authors thank Sergio Martinez for his support in providing the data
reported in~\citep{martinez_2018} as well as the \texttt{Gnuplot} files for
postprocessing the data.

\section*{Data availability statement}
A high performance implementation of the algorithms discussed in this paper
are freely available as part of the \texttt{ryujin}
project~\citep{ryujin-2021-1,ryujin-2021-3}. The source code repository is
located at \url{https://github.com/conservation-laws/ryujin}. Parameter and
configuration files for the numerical illustrations reported in
Section~\ref{sec:illustrations} are made available upon request.

%%%%%%%%%%%%%%%%%%%%%%%%%%%%%%%%%%%%%%%%%%%%%%%%%%%%%%%%%%%%%%%%%%%%%%%%%%%%%%%%
\FloatBarrier
\appendix
%% Boundary conditions
\section{Boundary conditions}\label{sec:boundary_conditions}
In this section, we describe how the boundary conditions are enforced for the IDP explicit Runge-Kutta schemes above.
Recall that the Shallow Water Equations with flat topography (and no external source terms) are equivalent to the isentropic compressible Euler Equations when the adiabatic index $\gamma$ is 2.
Thus, the details in this section are a direct modification of the work seen in~\citep{ryujin-2021-3} where it is shown how to enforce ``reflecting'' (\ie slip or wall) and ``non-reflecting'' boundary conditions for the isentropic Euler Equations.
\subsection{Preliminaries}
Boundary conditions are enforced by post-processing the approximation at the end of each stage of the ERK-IDP algorithm.
We consider two types of boundary conditions: \textup{(i)} Reflecting conditions, also called ``slip'' or ``wall'': $\bv\SCAL\bn = 0$; \textup{(ii)} Non-reflecting conditions.
Let $\partial D\dr\subset\partial D$ be the boundary where reflecting conditions are enforced. Let $\partial D\dnr$ denote the complement of $\partial D\dr$ in $\partial D$ where non-reflecting conditions will be enforced.
Let $\calV\dr^\partial\subset\calV^\partial$ be the collection of all the boundary degrees of freedom $i$ such that $\varphi_{i|\partial D\dr}\not\equiv 0$.
Let $\calV^\partial\dr\subset\calV^\partial$ be the collection of all boundary degrees of freedom $i$ such that $\varphi_{i|\partial D\dnr}\not\equiv 0$. We define the normal vectors associated with the degrees of freedom in $\calV\dr^\partial$ and $\calV\dnr^\partial$, respectively:
\begin{equation}
    \bn_i\upr\eqq \frac{\int_{\partial D\dr}\varphi_i\bn\diff s}{\|\int_{\partial D\dr}\varphi_i\bn\diff s\|_{\ell^2}}, \qquad \bn_i\upnr\eqq \frac{\int_{\partial D\dnr}\varphi_i\bn\diff s}{\|\int_{\partial D\dnr}\varphi_i\bn\diff s\|_{\ell^2}}.
\end{equation}
In the following two sections, the symbol $\bsfU$ denotes the state obtained at the end of each stage. The post-processed state is denoted by $\bsfU\upP$.
\subsection{Reflecting boundary conditions}
Let $i\in\calV\dr^\partial$ and let $\bsfU_i = (\sfH_i, \bsfQ_i)\tr$.
Reflecting boundary conditions are enforced at the $i$-th degree of freedom by setting:
\begin{equation}
    \bsfU_i\upP \eqq (\sfH_i, \bsfQ_i - (\bsfQ_i\SCAL\bn_i^{\text{r}})\bn_i^{\text{r}})\tr.
\end{equation}
\subsection{Non-reflecting boundary conditions}
We now consider non-reflecting boundary conditions at $i\in\calV\dnr^\partial$ based on Riemann invariants.
We note that the idea of working with the Riemann invariants of the Shallow Water Equations for the use of boundary conditions is a common approach in the literature (see:~\cite[Sec.~6]{bristeau2001boundary}).
For notational simplicity, we assume the following states are at the $i$-th degree of freedom and drop the subscript notation. Here, $\bn\eqq\bn_i\upnr$.

Let $\waterh\eqq\waterh(\bsfU),\,\bsfQ\eqq\bq(\bsfU)$ and set:
\begin{equation}
    \bsfV\eqq\waterh^{-1}\bsfQ,\qquad \sfV_n\eqq\bsfV\SCAL\bn,\qquad \bsfV^\perp\eqq\bsfV-(\bsfV\SCAL\bn)\SCAL\bn,\qquad a\eqq\sqrt{g\waterh}.
\end{equation}
Assume that the topography is flat and that there are no external source terms.
Then, the characteristic variables and characteristic speeds for the one-dimensional system, $\partial_t \bu + \partial_x(\polf(\bu)\bn) = 0$, are:
\begin{equation}
    \underbrace{
        \begin{cases}
            \lambda_1(\bsfU,\bn)\eqq\sfV_n - a \\
            \sfR_1(\bsfU,\bn)\eqq\sfV_n - 2 a,
        \end{cases}
    }_{\text{multiplicity }1}
    \qquad
    \underbrace{
        \begin{cases}
            \lambda_2(\bsfU,\bn)\eqq\sfV_n \\
            \bsfV^\perp,
        \end{cases}
    }_{\text{multiplicity }d-1}
    \qquad
    \underbrace{
        \begin{cases}
            \lambda_3(\bsfU,\bn)\eqq\sfV_n + a \\
            \sfR_3(\bsfU,\bn)\eqq\sfV_n + 2 a.
        \end{cases}
    }_{\text{multiplicity }1}
\end{equation}
Note in passing there are only two Riemann invariants for the Shallow Water Equations, but we use the notation $\sfR_1$ and $\sfR_3$ so that they correspond directly to the eigenvalues.
We consider four different cases depending on the type of flow at the boundary:\\
\begin{itemize}
    \item[(i)] torrential inflow: $\hspace{8pt}\sfV_n < 0\text{ and }a<|\sfV_n| \qquad\lambda_1\leq\lambda_2\leq\lambda_3<0$,
    \item[(ii)] torrential outflow: $\hspace{4pt}0\leq\sfV_n \text{ and }a\leq|\sfV_n| \qquad0\leq\lambda_1\leq\lambda_2\leq\lambda_3$,
    \item[(ii)] fluvial inflow:$\hspace{27pt}\sfV_n < 0\text{ and }|\sfV_n|<a \qquad\lambda_1\leq\lambda_2<0\leq\lambda_3$,
    \item[(iv)] fluvial outflow: $\hspace{17pt}0\leq\sfV_n \text{ and }a<|\sfV_n| \qquad\lambda_1<0\leq\lambda_2\leq\lambda_3$.\\
\end{itemize}
Note that the nomenclature ``torrential'' is equivalent to ``supersonic'' and ``fluvial'' is equivalent to ``subsonic'' in the context of gas dynamics.
We assume that outside the domain $D$, we have at hand some Dirichlet data $\bsfU\upD\eqq(\waterh\upD, \bsfQ\upD)\tr$.
Just as in~\citep{ryujin-2021-3}, we are going to postprocess the solution $\bsfU$ so that the characteristic variables of the post-processed state $\bsfU\upP$ associated with the in-coming eigenvalues match those of the prescribed Dirichlet data $\bsfU\upD$, while leaving the out-going characteristics unchanged.
More precisely, the strategy consists of finding $\bsfU\upP$ so that the following holds:
\begin{subequations}
    \begin{align}
         & \sfR_l(\bsfU\upP) =
        \begin{cases}
            \sfR_l(\bsfU\upD) & \text{if }\lambda_l(\bsfU,\bn^{\textup{nr}})<0,    \\
            \sfR_l(\bsfU)     & \text{if }0\leq\lambda_l(\bsfU,\bn^{\textup{nr}}),
        \end{cases}
        \quad l\in\{1,3\},     \\
         & (\bsfV\upP)^\perp =
        \begin{cases}
            (\bsfV\upD)^\perp & \text{if }\lambda_2(\bsfU,\bn^{\textup{nr}})<0,    \\
            \bsfV^\perp       & \text{if }0\leq\lambda_2(\bsfU,\bn^{\textup{nr}}).
        \end{cases}
    \end{align}
\end{subequations}
We now solve the above system for each of the four flow configurations mentioned above.\\[0.5em]
\textit{Torrential inflow}: Assume that $\lambda_1(\bsfU,\bn)\leq\lambda_2(\bsfU,\bn)\leq\lambda_3(\bsfU,\bn)<0$.
Since all the characteristics are entering the computational domain, the postprocessing consists of replacing $\bsfU$ by $\bsfU\upD$:
\begin{equation}
    \bsfU\upP = \bsfU\upD.
\end{equation}
\textit{Torrential outflow}: Assume that $0\leq\lambda_1(\bsfU,\bn)\leq\lambda_2(\bsfU,\bn)\leq\lambda_3(\bsfU,\bn)$.
Since all the characteristics are exiting the computational domain, the postprocessing consists of doing nothing:
\begin{equation}
    \bsfU\upP = \bsfU.
\end{equation}
\textit{Fluvial inflow}: Assume that $\lambda_1(\bsfU,\bn)\leq\lambda_2(\bsfU,\bn)<0<\lambda_3(\bsfU,\bn)$.
Then, $\bsfU\upP$ is obtained by solving the following system:
\begin{equation}
    \sfR_1(\bsfU\upP) = \sfR_1(\bsfU\upD), \qquad (\bsfV\upP)^\perp = (\bsfV\upD)^\perp,\qquad \sfR_3(\bsfU\upP) = \sfR_3(\bsfU).
\end{equation}
This gives that $\sfV_n\upP = \frac12\left(\sfR_1(\bsfU\upD) + \sfR_3(\bsfU)\right)$ and $4a\upP = \left(\sfR_3(\bsfU) - \sfR_1(\bsfU\upD)\right) = \sfV_n + 2 a - (\sfV_n\upD - 2 a\upD)$.
Since in this flow configuration $\sfV_n + 2 a > 0$, for $a\upP$ to be positive it must be that: $\sfV_n\upD \leq 2a\upD$ which is an admissibility condition on the Dirichlet data.
Finally, the postprocessing for a fluvial inflow boundary condition consists of setting the solution $\bsfU\upP$ to:
\begin{subequations}
    \begin{align}
         & \waterh\upP = \frac1g(a\upP)^2 = \frac1g\left(\frac{\sfR_3(\bsfU) - \sfR_1(\bsfU\upD)}{4}\right)^2,                                                                        \\
         & \bsfQ\upP =\waterh\upP \times \left((\bsfV\upD)^\perp + \sfV_n\upP\bn\right), \quad \text{ with }\quad \sfV_n\upP = \frac12\left(\sfR_1(\bsfU\upD) + \sfR_3(\bsfU)\right).
    \end{align}
\end{subequations}
\textit{Fluvial outflow}: Assume that $\lambda_1(\bsfU,\bn)<0<\lambda_2(\bsfU,\bn)<\lambda_3(\bsfU,\bn)$.
Then, $\bsfU\upP$ is obtained by solving the following system:
\begin{equation}
    \sfR_1(\bsfU\upP) = \sfR_1(\bsfU\upD), \qquad (\bsfV\upP)^\perp = \bsfV^\perp,\qquad \sfR_3(\bsfU\upP) = \sfR_3(\bsfU).
\end{equation}
Notice now that only one Dirichlet condition is prescribed on the first characteristic since $\lambda_1 < 0$.
Again, we have that $\sfV_n\upP = \frac12\left(\sfR_1(\bsfU\upD) + \sfR_3(\bsfU)\right)$ and $4a\upP = \left(\sfR_3(\bsfU) - \sfR_1(\bsfU\upD)\right)$.
We also have the same admissibility condition on the Dirichlet data: $\sfV_n\upD \leq 2a\upD$ for $a\upP>0$.
Finally, the postprocessing for a fluvial outflow boundary condition consists of setting the solution $\bsfU\upP$ to:
\begin{subequations}
    \begin{align}
         & \waterh\upP = \frac1g(a\upP)^2 = \frac1g\left(\frac{\sfR_3(\bsfU) - \sfR_1(\bsfU\upD)}{4}\right)^2,                                                                  \\
         & \bsfQ\upP =\waterh\upP \times \left(\bsfV^\perp + \sfV_n\upP\bn\right), \quad \text{ with }\quad \sfV_n\upP = \frac12\left(\sfR_1(\bsfU\upD) + \sfR_3(\bsfU)\right).
    \end{align}
\end{subequations}
%%%%%
\begin{remark}[Conservation and admissibility] The conservation and admissibility properties of the proposed boundary conditions are described in~\citep[Sec.~4.3.3]{ryujin-2021-3}.
\end{remark}

%%%%%%%%%%%%%%%%%%%%%%%%%%%%%%%%%%%%%%%%%%%%%%%%%%%%%%%%%%%%%%%%%%%%%%%%%%%%%%%%

\bibliographystyle{abbrvnat}
\bibliography{ref}

%%%%%%%%%%%%%%%%%%%%%%%%%%%%%%%%%%%%%%%%%%%%%%%%%%%%%%%%%%%%%%%%%%%%%%%%%%%%%%%%

\end{document}